 \def\l@subsection{\@tocline{2}{0pt}{4pc}{6pc}{}}
\def\l@subsubsection{\@tocline{3}{0pt}{8pc}{8pc}{}}
\newcommand{\subscript}[2]{$#1 _ #2$}
\DeclareSymbolFont{symbolsC}{U}{pxsyc}{m}{n}
\DeclareMathSymbol{\coloneqq}{\mathrel}{symbolsC}{"42}
\newcommand{\N}{\Z_{\geq 0}}
\newcommand{\Z}{\mathbb{Z}}
\newcommand{\C}{\mathbb{C}}
\newcommand{\R}{\mathbb{R}}
\newcommand{\Hi}{\mathcal{H}}
\newcommand{\Li}{\mathcal{B}}
\newcommand{\cj}{\overline}
\newcommand{\op}{\mathrm}
\newcommand{\dd}{\mathrm{dist}}
\newcommand{\LL}{{\mathbb{L}}}
\DeclarePairedDelimiterX{\normb}[1]{\lVert}{\rVert}{#1}
\numberwithin{equation}{section}
\newtheorem{theorem}{Theorem}[section]
\newtheorem{lemma}[theorem]{Lemma}
\newtheorem{proposition}[theorem]{Proposition}
\newtheorem{corollary}[theorem]{Corollary}
\let\origproofname\proofname
\renewcommand{\proofname}{\upshape\textbf{\origproofname}}
\newcommand{\vf}{\mathbf}
\theoremstyle{definition}
\newtheorem{definition}[theorem]{Definition}
\newtheorem{remark}[theorem]{Remark}
\newtheorem*{theorem*}{Theorem}
\newtheorem*{rep@theorem}{\rep@title}
\newcommand{\newreptheorem}[2]{%
\newenvironment{rep#1}[1]{%
 \def\rep@title{#2 \ref{##1}}%
 \begin{rep@theorem}}%
 {\end{rep@theorem}}}
\begin{document}

\title{$L^p$-spectral Triples and $p$-Quantum Compact Metric Spaces }

\author{Alonso Delfín}
\email{alonso.delfin@colorado.edu}
\urladdr{https://math.colorado.edu/\symbol{126}alde9049}

\author{Carla Farsi}
\email{carla.farsi@colorado.edu}
\urladdr{https://www.colorado.edu/math/carla-farsi}

\author{Judith Packer}
\email{judith.jesudason@colorado.edu}
\urladdr{https://math.colorado.edu/\symbol{126}packer}
\address{Department of Mathematics, University of Colorado, Boulder, CO 80309-0395}

\date{\today}

\subjclass[2020]{Primary 46H15, 46H35, 46L89, 58B34; Secondary 43A15}
\thanks{\textsc{Department of Mathematics, University of Colorado, Boulder CO 80309-0395, USA}
}
\keywords{Spectral triples, Dirac Operator, Length Functions, UHF algebras, $L^p$-operator algebras, Quantum metrics}

\maketitle

\begin{abstract}
For $p \in [1, \infty)$, we generalize the concept of classical spectral triples
by extending the framework from Hilbert spaces to $L^p$-spaces, and from C*-algebras to $L^p$-operator algebras. In addition, we define an $L^p$-spectral triple to be metric when the state space of the algebra has a $p$-quantum compact metric space structure. 
Specifically, we construct $L^p$-spectral triples for reduced $L^p$-group algebras of countable discrete groups with proper length functions and also for $L^p$ UHF-algebras of infinite tensor product type, the latter inspired by E.\ Christensen and C.\ Ivan's construction of a Dirac operator on AF C*-algebras. We prove that $L^p$-spectral triples associated with $L^p$-group algebras (provided that the length function is of bounded doubling) and those associated with $L^p$ UHF-algebras are always metric.
\end{abstract}

\tableofcontents

\section{Introduction}

In the early 20th century, the study of linear operators and function spaces became important in both mathematics and physics.  The work of S.\ Banach, R.\ M.\ Fr\'ech\'et and D.\ Hilbert culminated in S.\ Banach's formulation of abstract complete normed spaces. A decade later, in the 1930's and 1940's, operator algebras were developed by J.\ von Neumann and then I.\ Gelfand, M.\ Naimark, and I.\ Segal, in part because of the need to provide a rigorous mathematical foundation for quantum mechanics. In operator algebra theory, the study of general Banach algebras held a prominent role,  even though  sub-branches of this theory (pertinent to C*- and von Neumann algebras) quickly gained importance.  Quantized states appeared as objects belonging to the  state spaces of these structures, which are, roughly speaking, spaces of  linear functionals. In addition, analysis on locally compact groups also played a prominent role, since groups are often used to  represent symmetries of  spaces of operators. 

For $p\in [1, \infty)$, an $L^p$-operator algebra is a Banach algebra 
which admits an isometric representation on an  $L^p$-space. Representations of groups on $L^p$-spaces and the algebras they generate is a highly active area of current research and is regarded as one of the central branches of harmonic analysis. Research in the area was largely started by C. S. Herz in \cite{CSH73, CSH71}, who studied $L^p$-operator algebras generated by the left regular representation of a locally compact group. 
In the 2010's, N.\ C.\ Phillips re-initiated their study, leading a program aimed at generalizing the modern theory of C*-algebras to  $L^p$-operator algebras, or outlining their differences.  Since their reintroduction, $L^p$-operator algebras have since seen a growing amount of interest among researchers, see for example 
\cite{NCP13_2, ncp2012AC, NCP13, Choi15, Gardella15, GarThi15, GarLup16,ncpmgv2017AF, Chung18, Cortinas19, Gardella21, Choi24}. Part of this trend has been marked by the extension to their $L^p$-analogues of various constructions traditionally associated with $C^*$-algebras. Notably, algebras such as Cuntz algebras \cite{ncp2012AC}, group and groupoid algebras \cite{NCP13, GL17}, crossed products \cite{NCP13}, and those arising from finite-dimensional constructions such as AF and UHF-algebras \cite{NCP13_2, ncpmgv2017AF}, have all found generalizations in the context of $L^p$-spaces.

Spectral triples, whose origin stems from an axiomatic characterization of the notion of  unbounded selfadjoint operators with compact resolvent on Hilbert spaces, represent a significant construction in the realm of noncommutative
geometry.  Their study was initiated by A.\ Connes in the late 1980's \cite{Connes89}. These triples provide a framework for extracting geometric data from operator algebras, particularly through the quantum metric space structure they induce on the state space of their C*-algebras via constructions pioneered by A.\ Connes and M.\ A.\ Rieffel \cite{Connes89, R98, R99, Rieffel00}; see also \cite{AguLat, ChristRieffel17,FLP24, Latremoliere16b, LoWu21}. When the weak-$*$ topology on the state space of a C*-algebra agrees with the topology induced by such a quantum metric, the state space is called \textit{a quantum compact metric space}. These spaces, introduced in their full generality by  M.\ A.\ Rieffel \cite{R98, R99, Rieffel00,Kerr03, Li06}, are thought of as a noncommutative analogue of the algebra of Lipschitz functions on a compact metric space, can also be obtained in other ways that do not require the use of spectral triples (particularly by F.\ Latrémolière \cite{AguLat, Latremoliere16b}), and have been thoroughly studied in the past two decades. 

Spectral triples  associated to 
groups endowed with a length function were introduced first by A.\ Connes \cite{Connes89}, and then shown to give rise to quantum compact space by  M.\ Christ and M.\ A.\ Rieffel in the bounded doubling case, see \cite{ChristRieffel17}.
Their  work was further generalized to the twisted setting by B.\ Long and W. \ Wu \cite{LongWuRes17, LoWu21}. This allowed the limit results of \cite{FaLaLaPa, FLP24};  for crossed products see \cite{AusKK25}. Very recently,  A.\ Austad and D.\ Kyed also studied the quantum metric structure arising from length functions on quantum groups \cite{Austad25}. While the work by A.\ Connes focuses on spectral triple aspects, M.\ A.\ Rieffel starts by considering metrics on ``state-like spaces'' of Banach algebras. 
Indeed, M.\ A.\ Rieffel's original definitions in \cite{R98} were already cast in a general normed space context and did not require a C*-algebra structure. This makes M.\ A.\ Rieffel's work a natural place to start when looking for analogues outside of the C*-algebra setting.

In very recent independent work whose topic is related to our own, C.\ Arhancet and  C.\ Kriegler have developed classical harmonic analysis in terms of noncommutative geometry, in particular  associating quantum compact metric spaces to some Markov semigroups of Fourier multipliers (imposing therefore a functional calculus requirement). Specifically, in  the papers \cite{Arhancet-Kriegler-22, Arhancet24, Arha24, Arha24CV, ArhaSob24}, (compact) Banach spectral triples  as well as associated  Banach Fredholm modules, $K$-theory and $K$-homology,  and index formulas have been introduced and studied. The case of the circle provides a foundational example  \cite{Arhancet-Kriegler-22, Arhancet24}.  In another direction, see \cite{GerMes25} for a study of  Monge--Kantorovi\v{c} metrics using Schatten ideals.\\

In this paper, we propose a generalization of the concept of spectral triples by replacing the Hilbert space framework with a more general $L^p$-space setting. This is done in Definition \ref{LpST}, which has already been used in  recent work \cite{lopes25}. We demonstrate that A.\ Connes' classical spectral triple example for group algebras extends to this $L^p$-based framework. Additionally, building on the work of E.\ Christensen and C.\ Ivan \cite{CI}, who constructed a Dirac operator for AF-algebras, we present an $L^p$-analogue for UHF-algebras of infinite tensor product type, as defined by N.\ C.\ Phillips in \cite{NCP13_2}. This construction leads to an $L^p$-spectral triple that induces a $p$-quantum compact metric on the state space of the algebra. 

We now lay out the structure of the paper. 
Section \ref{secPre} sets out our notational conventions and 
reminds the reader the basic theory of $L^p$-operator 
algebras. In particular $L^p$-analogues of well known constructions such as tensor products, reduced group algebras, and AF-algebras, are introduced. 
 In Section \ref{secDefiLp} we briefly go over classical definitions 
 of spectral triples and quantum compact metrics associated to the state space of a C*-algebra. 
All this is used as a motivation to introduce Definition \ref{LpST}, which gives 
an analogue of spectral triples in the $L^p$-setting. There is a recent notion of (compact) Banach spectral triples, using bisectorial bounded $H^\infty$ functional calculus, that also generalizes classical ones. 
The precise definition of a (compact) Banach spectral triple can be found in \cite[Definition 5.10]{Arhancet-Kriegler-22}.  In Remark \ref{CompDEFs} we provide details and comparisons between 
this and our definition for the $L^p$-case. Furthermore, 
we also present, in Subsection \ref{ex_Judy}, an instance of an $L^2$-spectral triple 
that satisfies Definition \ref{LpST}, but fails to admit a sectorial bounded $H^\infty$ functional calculus.
At the end of Section \ref{secDefiLp} we also briefly review
 M.\ A.\ Rieffel's general method for constructing quantum compact metrics. 

Our main results are presented in Sections \ref{Examples} and \ref{DOUHF}. In Subsection 
\ref{DOLpG}, for each $p \in [1, \infty)$, we mimic A.\ Connes' classical example by associating to a fixed proper 
length function $\LL$ on a countable discrete group $G$ a Dirac operator $D_\LL \colon C_c(G) \to \ell^p(G)$, by letting, for any $x \in G$, $\xi \in C_c(G)$,
\[
(D_\LL\xi)(x) \coloneqq \LL(x)\xi(x).
\]
 We then show that this 
operator does give rise to an $L^p$-spectral triple on $F^p_{\op{r}}(G)$,
the reduced $L^p$-group algebra of $G$: 

\begin{reptheorem}{GdisLpT}
Let $p \in [1, \infty)$, let $G$ be a countable discrete group, 
and let $\LL$ be a proper length function on $G$. Then, $(F^p_{\op{r}}(G), \ell^p(G), D_\LL)$ is an $L^p$-spectral triple
as in Definition \ref{LpST}.
\end{reptheorem} 

Next, when the length function on the group $G$ is of bounded doubling, 
we obtain a general $p$-version of the recent key result from \cite{ChristRieffel17}:

\begin{reptheorem}{thm_Gdb_}
Let $p \in [1, \infty)$, let $G$ be a countable discrete group, and let $\LL \colon G \to [0,\infty)$ be a 
length function of bounded doubling on $G$. Let $D=D_{\LL}$ be the associated 
multiplication operator. Then the seminorm $L_D(f) \coloneqq \| [D,\lambda_p(f)]\|$ 
metrizes the weak-* topology of $S(F_r^p(G))$.
\end{reptheorem}

In Section \ref{DOUHF}, motivated by the general C* AF-algebra setting provided in \cite{CI}, we prove a similar result for $L^p$ UHF-algebras.
To do so, we first develop a convenient equivalent definition 
for $L^p$ UHF-algebras of infinite tensor product type, which were originally defined 
in \cite{NCP13_2}. Roughly speaking, given a sequence $d \colon \Z_{\geq 0} \to \Z_{\geq 2}$, 
 we say that $A$ is an $L^p$ UHF-algebra
associated to the sequence $d$  
if it is isometrically isomorphic to 
\[
\cj{\bigcup_{n=0}^\infty A_n},
\]
where each $A_n$ is algebraically isomorphic to $ M_{d(1)\cdots d(n)}(\C)$, and the norm 
on $A_n$ comes from carefully representing it on an $L^p$-space built 
recursively, via spatial tensor products, from the representation of $A_{n-1}$. 
The precise definition we will use is contained in Definition \ref{Prob_leq_>} and the subsequent discussion. This naturally represents $A$ on $L^p(\mu_{\geq 0})$, where $\mu_{\geq 0}$ is a measure built from the representation of each $A_n$ (which we take to satisfy some 
finite dimensional assumptions, see \ref{A1}-\ref{A4} below). In turn, this allows us to define inclusion and projection maps within
the $L^p$-spaces on which $A$ and each $A_n$ are represented. These structural maps, together 
with any sequence of real numbers $(\alpha_{n})_{n=0}^\infty$ with $\alpha_0=0$ and $\alpha_n \to \infty$
give rise to a Dirac operator on $A$ given by 
\[
D_\alpha \coloneqq \sum_{n=1}^\infty \alpha_n(P_n-P_{n-1}),
\] 
where each $P_n \colon L^p(\mu_{\geq 0}) \to  L^p(\mu_{\geq 0})$ is an idempotent obtained by composing the projection and inclusion aforementioned. We then prove the following result

\begin{reptheorem}{UHFpST}
Let $p \in [1, \infty)$, let $A=A(d,\rho)$ be an $L^p$ UHF-algebra as defined in 
\eqref{UHFLP} and also satisfying assumptions \ref{A1}-\ref{A4}, and let $(\alpha_{n})_{n=0}^\infty$ with $\alpha_0=0$ and $\alpha_n \to \infty$. 
Then, $(A, L^p(\mu_{\geq 0}), D_\alpha)$ is an $L^p$-spectral triple 
as in Definition \ref{LpST}.
\end{reptheorem}

In section \ref{QMUHF}, we prove Theorem \ref{FinThm}, our final main result,  which says that one can always find a sequence $(\alpha_n)_{n=0}^\infty$ such that
Dirac operator $D_\alpha$ gives rise to a $p$-quantum compact metric 
on the state space the $L^p$ UHF-algebra from Theorem \ref{UHFpST}. 

Section \ref{Gp_UHF} ends the paper by presenting a unifying example.
Here, for each $n \in \Z_{\geq 2}$, we take $G_n$ to be the infinite direct sums of cylic groups of order $n$. We then put a length function of 
bounded doubling on $G_n$ so that $F_{\op{r}}^p(G_n)$ is now an $L^p$-UHF algebra that gives rise to a metric spectral triple by our results in Subsection \ref{SubSCRp}. This establishes instances in 
which both the results of Section \ref{Examples} and Section \ref{DOUHF} apply and is already of interest in the classical setting when $p=2$. 

\section*{Acknowledgments} This research was partly supported by: Simons Foundation Collaboration Grant \#523991 and   MPS-TSM-00007731   (C.F.), Simons Foundation Collaboration Grant \#1561094 (J.P.). The authors also thank Frédéric Latrémolière for helpful conversations on the topics covered in this work. We also thank Cédric Arhancet for letting us know about the useful references \cite{Arhancet-Kriegler-22, Arhancet24}.

\section{Preliminaries}\label{secPre}

Below we state our conventional notations that will be used throughout the 
paper. 
\begin{enumerate}
\item We begin by recalling the reader that a \textit{Banach space} is 
a complex normed vector space that is complete with the metric 
induced by the norm. A complex algebra $A$ is said to be a \textit{Banach algebra} if it is equipped with a submultiplicative norm that makes $A$ a Banach space. 
\item For any Banach space $(E, \|-\|_E)$ we write $\Li(E)$ to denote the Banach algebra 
of bounded operators on $E$. The algebra $\Li(E)$ comes equipped with the operator norm:
\[
\| a \| \coloneqq \sup_{\| \xi \|_E = 1 } \| a\xi\|_E.
\] 
Our convention from now on is that any norm without a subscript 
 refers to an appropriate operator norm. 
Also, $\mathcal{K}(E) \subseteq \Li(E)$ corresponds to 
the closed ideal of compact operators on $E$. 
\item If $(X, \mathfrak{M}, \mu)$ is a measure space and $p \in [1, \infty)$, we often
write $L^p(\mu)$ instead of $L^p(X, \mathfrak{M}, \mu)$, 
which is a Banach space when equipped with the usual $p$-norm:
\[
\| \xi \|_p  \coloneqq \|\xi\|_{L^p(\mu)} \coloneqq  \left(\int_X |\xi|^p d\mu\right)^{1/p}
\] 
When $\nu_X$ is the counting measure on any set $X$, 
we simply write $\ell^p(X)$ to refer to $L^p(X, 2^X, \nu_X)$. 
We adopt the usual notation and let $(\delta_x)_{x \in X}$ be the usual 
basis for $\ell^p(X)$, where for each $x \in X$, $\delta_x \colon X \to \C$ is
given by 
\[
\delta_x(y) \coloneqq \begin{cases} 1 & \text{ if }  x=y\\
0 & \text{ if } x\neq y
\end{cases}.
\]
Finally, when $d \in \Z_{\geq 1}$
we write $\ell_d^p$ instead of $\ell^p(\{1, \ldots, d\})$.  
\end{enumerate}

We also briefly recall for the reader some basic definitions and facts about 
linear, potentially unbounded, operators between Banach spaces. 

\begin{definition}
Let $E$ be a Banach space and let $\mathcal{D} \subseteq E$ be a (usually dense) subspace. 
For a linear map $a \colon \mathcal{D} \to E$ 
we define its \textit{resolvent set} $\op{res}(a)$ by 
\[
\op{res}(a) \coloneqq \{\lambda \in \C \colon  a-\lambda \op{Id}_E \colon \mathcal{D} \to E \text{
has a bounded everywhere-defined inverse}\}.
\]
To each $\lambda \in \op{res}(a)$ we associate the \textit{resolvent operator}
$R_\lambda(a) \in \Li(E)$ given by
\[
R_\lambda(a) \coloneqq ( a-\lambda \op{Id}_E)^{-1}.
\]
Finally, the \textit{spectrum of 
$a$} is then defined by $\sigma(a) \coloneqq \C \setminus \op{res}(a)$. 
\end{definition}

We now go over basic definitions and some known facts about $L^p$-operator algebras, their tensor products, 
and review two known constructions: reduced $L^p$-group algebras and $L^p$ AF-algebras.

\subsection{$L^p$-Operator Algebras}

\begin{definition}
Let $p \in [1, \infty)$, let  $(X, \mathfrak{M}, \mu)$ be a measure space, 
let $A$ be a Banach algebra, and let $\varphi \colon A \to \Li(L^p(\mu))$ be a representation of $A$ on 
$L^p(\mu)$, that is $\varphi$ is a continuous algebra homomorphism.
We say that $A$ is \textit{representable on $L^p(\mu)$}
whenever $\varphi$ is isometric, that $\varphi$ is \textit{$\sigma$-finite} if  
 $(X, \mathfrak{M}, \mu)$ is $\sigma$-finite, and that 
$A$ is \textit{$\sigma$-finitely representable} if 
it has a $\sigma$-finite isometric representation. 
\end{definition}

\begin{definition}\label{Lp}
Let $p \in [1, \infty)$. A Banach algebra $A$ is an 
\textit{$L^p$ operator algebra} if there is a measure space
$(X, \mathfrak{M}, \mu)$ such that $A$ is representable
on $L^p(\mu)$.  
\end{definition}

\subsection{Spatial Tensor Product}

For $p \in [1, \infty)$, there is a tensor product between an $L^p$-space and a general Banach space,
 called the 
\textit{spatial tensor product},  denoted by $\otimes_p$. 
We refer the reader to Section 7 of \cite{defflor1993} for complete details on this 
tensor product. The general setting starts 
with considering a measure space $(X_0, \mathfrak{M}_0, \mu_0)$ and a Banach space $E$.
We denote by $L^0(X_0,\mu_0, E)$ the vector space of measurable functions $X_0 \to E$ modulo functions that vanish a.e. $[\mu_0]$.
Then there is an isometric isomorphism 
\[
L^p(\mu_0) \otimes_p E \cong L^p(\mu_0, E)= \{ g \in L^0(X_0,\mu_0, E) 
\colon \textstyle{\int_{X_0}} \|g(x)\|_E^p \  d\mu_0(x) < \infty\},
\]
such that for any $\xi \in L^p(\mu_0)$ and $\eta \in E$, the elementary tensor $\xi \otimes \eta $ 
is sent to the function $x \mapsto \xi(x)\eta$.
In the special case where $E=L^p(\mu_1)$ for a measure space $(X_1,  \mathfrak{M}_1, \mu_1)$, then there is an isometric isomorphism
\begin{equation}\label{LpTensor}
L^p(\mu_0) \otimes_p L^p(\mu_1) \cong L^{p}(X_0 \times X_1, \mu_0 \times \mu_1),
\end{equation}
sending $\xi \otimes \eta$ to the function 
$(x_0,x_1) \mapsto \xi(x_0)\eta(x_1)$ for every $\xi \in L^p(\mu_0)$
and $\eta \in L^p(\mu_1)$. 
We describe the main properties of this 
tensor product between $L^p$-spaces below. 
The following is Theorem 2.16 in \cite{ncp2012AC}, 
except that we do not require the $\sigma$-finiteness assumption 
(see the proof in Theorem 1.1 in \cite{figiel1984}).
\begin{theorem}\label{SpatialTP}
Let $p \in [1, \infty)$ and for $j \in \{0,1\}$ let 
$(X_j, \mathfrak{M}_j, \mu_j)$, $(Y_j, \mathfrak{N}_j, \nu_j)$ be measure spaces.
\begin{enumerate}
\item Under the identification in \eqref{LpTensor}, 
$\op{span}\{ \xi \otimes \eta  \colon \xi \in L^p(\mu_0), \eta \in L^p(\mu_1)\}$ 
is a dense subset of $L^{p}(X_0 \times X_1, \mu_0 \times \mu_1)$.  \label{denseTensor_p}
\item $\| \xi \otimes \eta \|_p = \| \xi\|_p\|\eta\|_p$  for every 
$\xi \in L^p(\mu_0)$ and $\eta \in L^p(\mu_1)$. \label{LpTensorNorm}
\item Suppose that 
$a \in \Li(L^p(\mu_0), L^p(\nu_0))$
and $b \in \Li(L^p(\mu_1), L^p(\nu_1))$. 
Then there is a unique map 
$a\otimes b \in \Li(L^p(\mu_0 \times \mu_1),
 L^p( \nu_0 \times \nu_1))$ 
such that 
\[
(a \otimes b)(\xi \otimes \eta)=a\xi \otimes b\eta
\]
for every $\xi \in L^p(\mu_0)$ and $\eta \in L^p(\mu_1)$. 
Further, $\| a\otimes b\|=\|a\|\|b\|$. \label{LpTensorOp}
\item The tensor product of operators defined in \eqref{LpTensorOp} is associative, bilinear, 
and satisfies (when the domains are appropriate) 
$(a_1 \otimes b_1)(a_2 \otimes b_2) = a_1 a_2 \otimes b_1b_2$. \label{LpTensorOpProp}
\end{enumerate}
\end{theorem}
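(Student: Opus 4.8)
The plan is to establish the four items in the order stated, bootstrapping everything from explicit computations on elementary tensors together with the density asserted in \eqref{denseTensor_p}. The norm identity \eqref{LpTensorNorm} is immediate: since $|\xi \otimes \eta|^p = |\xi(x_0)|^p|\eta(x_1)|^p$ as a function on $X_0 \times X_1$, Tonelli's theorem (applied on the $\sigma$-finite rectangle $\{\xi \neq 0\} \times \{\eta \neq 0\}$ that supports $\xi \otimes \eta$) gives $\norm{\xi \otimes \eta}_p^p = \norm{\xi}_p^p \norm{\eta}_p^p$, and taking $p$-th roots finishes it. For the density in \eqref{denseTensor_p}, I would first reduce to the $\sigma$-finite situation by observing that any $h \in L^p(X_0 \times X_1, \mu_0 \times \mu_1)$ is supported on a $\sigma$-finite set (the increasing union of the finite-measure sets $\{|h| > 1/n\}$); then approximate $h$ in $p$-norm by simple functions supported on sets of finite measure, and finally approximate a finite-measure measurable subset of $X_0 \times X_1$ by a finite disjoint union of measurable rectangles $A \times B$ with $\mu_0(A), \mu_1(B) < \infty$, using that such finite disjoint unions form an algebra generating the product $\sigma$-algebra. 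Since $\chi_{A \times B} = \chi_A \otimes \chi_B$, the span of elementary tensors is dense.

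For \eqref{LpTensorOp}, the key reduction is to the case where one of the two operators is the identity. Using the isometric identification $L^p(\mu_0 \times \mu_1) \cong L^p(\mu_1, L^p(\mu_0))$ (a vector-valued form of Fubini's theorem, valid after the $\sigma$-finite reduction above), one defines $a \otimes \op{Id}_{L^p(\mu_1)}$ by $g \mapsto (x_1 \mapsto a(g(x_1)))$; the pointwise estimate $\int_{X_1} \norm{a(g(x_1))}^p \, d\mu_1 \leq \norm{a}^p \int_{X_1} \norm{g(x_1)}^p \, d\mu_1$ shows $\norm{a \otimes \op{Id}} \leq \norm{a}$, while testing on $\xi \otimes \eta$ and invoking \eqref{LpTensorNorm} gives the reverse inequality, so $\norm{a \otimes \op{Id}} = \norm{a}$; the case $\op{Id} \otimes b$ is symmetric. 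One then \emph{defines} $a \otimes b \coloneqq (a \otimes \op{Id}_{L^p(\nu_1)}) \circ (\op{Id}_{L^p(\mu_0)} \otimes b)$, which sends $\xi \otimes \eta$ to $a\xi \otimes b\eta$ and is the unique bounded operator doing so by the density in \eqref{denseTensor_p}; submultiplicativity gives $\norm{a \otimes b} \leq \norm{a}\norm{b}$, and once more testing on elementary tensors via \eqref{LpTensorNorm} yields equality. Finally, \eqref{LpTensorOpProp} follows formally: bilinearity, associativity, and the multiplicativity formula $(a_1 \otimes b_1)(a_2 \otimes b_2) = a_1 a_2 \otimes b_1 b_2$ are identities between bounded operators that hold on the dense span of elementary tensors, hence everywhere by the uniqueness just established.

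The only genuinely delicate point is carrying out the two measure-theoretic reductions above — approximation of product-measurable sets by rectangles, and the vector-valued Fubini identification $L^p(\mu_0 \times \mu_1) \cong L^p(\mu_1, L^p(\mu_0))$ — \emph{without} assuming $\sigma$-finiteness of the underlying measure spaces. This is precisely the technical content one borrows from \cite{figiel1984} (and \cite[Section 7]{defflor1993}), so in practice I would invoke those references for the non-$\sigma$-finite bookkeeping rather than reproving it, and treat the rest of the argument as the routine elementary-tensor manipulations sketched here.
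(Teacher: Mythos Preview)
Your proposal is a correct and standard proof sketch. Note, however, that the paper does not actually supply its own proof of this theorem: it states the result as Theorem~2.16 of \cite{ncp2012AC}, with the removal of the $\sigma$-finiteness hypothesis attributed to \cite{figiel1984}. So there is no in-paper argument to compare yours against; you have in fact written out more than the paper does. Your approach---Tonelli for the norm identity, rectangle approximation for density, the vector-valued Fubini identification $L^p(\mu_0\times\mu_1)\cong L^p(\mu_1,L^p(\mu_0))$ to bound $a\otimes\op{Id}$, and then composing to define $a\otimes b$---is exactly the route taken in the cited references, and your acknowledgment that the non-$\sigma$-finite bookkeeping is the only genuinely technical step (handled by \cite{figiel1984}) matches the paper's own deferral.
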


\subsection{Reduced $L^p$-Group Algebras}\label{LpG}
The reduced $L^p$-operator algebra of a group was introduced 
by C. Herz in \cite{CSH73}. Later,  N. C. Phillips introduced in \cite{NCP13} 
reduced (and full) $L^p$-crossed products, from which the $L^p$-group 
algebras come out as a particular examples. 
Below we briefly describe the construction for countable
discrete groups. A general definition 
for locally compact groups is Definition 3.1 in \cite{GarThi15}.
Let $G$ be a countable discrete group and let $p \in [1, \infty)$. 
For $f=(f(x))_{x \in G} \in \ell^1(G)$ and $\xi=(\xi(x) )_{x \in G} \in \ell^p(G)$ we define 
$f*\xi \colon G \to \C $ by letting for each $x \in G$, 
\[
(f*\xi)(x) \coloneqq \sum_{y \in G} f(y)\xi(y^{-1}x).
\]
It is standard to check that $f*\xi \in \ell^p(G)$ with $\|f*\xi\|_p \leq \|a\|_1\|\xi\|_p$. 
This gives rise to the left regular representation $\lambda_p: \ell^1(G) \to \mathcal{B}(\ell^p(G))$
 given, for each $f \in \ell^1(G)$, $\xi \in \ell^p(G)$, 
 by 
 \[
 \lambda_p(f)\xi \coloneqq f*\xi.
 \] 
 We define 
the \emph{reduced $L^p$ operator group algebra} $F^p_{\op{r}}(G)$ as 
the closure of $\lambda_p(\ell^1(G))$ in $\mathcal{B}(\ell^p(G))$. 

Notice that $F^p_{\op{r}}(G)$ is a unital Banach subalgebra of $\mathcal{B}(\ell^p(G))$, (making $F^p_{\op{r}}(G)$ an $L^p$-operator algebra in its own right) with unit $\lambda_p(\delta_{1_G})$. It is also clear that $F^p_{\op{r}}(G)$ is commutative when $G$ is abelian.

Observe also that $f \mapsto \lambda_p(f)$ is an injective unital 
contractive algebra homomorphism from $\ell^1(G)$ to $F^p_{\op{r}}(G)$,
whence $\ell^1(G)$ embeds as a dense subalgebra of $F^p_{\op{r}}(G)$.
That is, $\lambda_p(\ell^1(G))$ is a dense subalgebra of $F^p_{\op{r}}(G) \subseteq \mathcal{B}(\ell^p(G))$. In particular, this also gives that $\lambda_p(C_c(G))$ is a dense subalgebra of $F^p_{\op{r}}(G)$

\subsection{Spatial $L^p$ AF-Algebras}\label{LpAH}

Spatial $L^p$ AF-algebras are a particular type of $L^p$-operator algebras 
introduced (and classified) by N. C. Phillips and M. G. Viola in \cite{ncpmgv2017AF}. 
Here we recall their construction.

For $p \in [1,\infty)$ and $d \in \Z_{\geq 1}$ we set $M_{d}^p(\C) \coloneqq \Li(\ell_d^p)$ 
as the algebra of $d \times d$ complex matrices equipped with the $p$-operator
norm. According to \cite[Definition 7.9]{ncpmgv2017AF}, when $p \neq 2$, an $L^p$-operator algebra is \textit{spatial semisimple finite dimensional} 
if there are $N \in \Z_{\geq 1}$ and $d_1, \ldots, d_N \in \Z_{\geq 1}$
such that $A$ is isometrically isomorphic to 
\[
\bigoplus_{j=1}^N M_{d_j}^p(\C),
\]
where the direct sum is equipped with the usual max norm. 
If in addition $B$ is a unital $\sigma$-finitely representable $L^p$-operator 
algebra and $\varphi \colon A \to B$ is a (not necessarily unital) 
homomorphism, then we say $\varphi$ is \textit{spatial}
if, for $j=1, \ldots, N$, each 
$\varphi|_{M_{d_j}^p(\C)}$ is contractive and each $\varphi(1_{d_j})$ is a hermitian 
idempotent (see \cite[Definition 5.5]{ncpmgv2017AF}) where $1_{d_j} \in M_{d_j}^p(\C)$ is the $d_j \times d_j$ identity matrix. 

A spatial $L^p$ AF-algebra is defined in \cite[Definition 8.1]{ncpmgv2017AF}
as a Banach algebra that is isometrically isomorphic to the direct limit  
of a directed system 
\[
((A_n)_{n=0}^{\infty}, (\varphi_{m,n})_{0 \leq n \leq m})
\]
such that for each $n \geq 0$, the algebra $A_n$ is a spatial semisimple finite dimensional $L^p$-operator algebra and for all $m,n$ with $n\leq m$, the homomorphism $\varphi_{m,n} \colon A_n \to A_m$
is spatial. 

In this paper, we mainly work with a particular type of $L^p$ AF-algebras. 
These are the so called $L^p$ UHF-algebra of tensor product type as defined in  
\cite[Definition 3.9]{NCP13_2}. As in the C*-case, if $d \colon \Z_{\geq 0} \to \Z_{\geq 2}$
is a sequence, 
these $L^p$ UHF-algebras arise from direct limits of systems 
$((A_n)_{n=0}^{\infty}, (\varphi_{m,n})_{0 \leq n \leq m})$ where each $A_n$ 
is algebraically isomorphic to $M_{d(0)\cdots d(n)}(\C)$. 
Since we will require minor modifications 
to N.\ C.\ Phillips' construction, mainly a notational difference and extra assumptions, 
we go over this construction in detail in Subsection \ref{DOUHF_ST} below. 

\section{$L^p$-Spectral Triples and $p$-Quantum Compact Metric Spaces}\label{secDefiLp}

\subsection{$L^p$-Spectral Triples}
We begin this section by recalling  the classical 
definition of a spectral triple.

\begin{definition}\label{ClassicST}(\cite[Section 2]{Connes89})
A \textit{spectral triple} $(A, \Hi, D)$ consists of 
a unital C*-algebra $A$ faithfully represented on a 
Hilbert space $\Hi$ via $\varphi \colon A \to \Li(\Hi)$, 
and an unbounded selfadjoint operator $D \colon \op{dom}(D) \subseteq \Hi \to \Hi$, 
known as the \textit{Dirac operator}, 
such that 
\begin{enumerate}
\item $(I+D^2)^{-1} \in \mathcal{K}(\Hi)$, \label{cprRes}
\item $\{ a \in A \colon \| [D, \varphi(a)] \| < \infty\}$ is 
a norm dense $*$-subalgebra of $A$. \label{ComC*Dense}
\end{enumerate}

We remark that, since $D$ is selfadjoint, condition \eqref{cprRes} above 
is equivalent to asking that $D$ has compact resolvent, that is 
$(D-\lambda I)^{-1} \in \mathcal{K}(\Hi)$ for all $\lambda \in \C \setminus \sigma(D)$. 
\end{definition}

To generalize Definition \ref{ClassicST} to the $L^p$-setting, 
we note that there is not a good analogue of an unbounded 
selfadjoint operator on Banach spaces. 
Ideally we would like to replace selfadjoint with the well 
studied notion of hermitian operator acting 
on a Banach space $E$ 
where $a \colon \op{dom}(a) \subseteq E \to E$ is said to be \textit{hermitian} 
if its Lumer numerical range is a subset of $\R$ (see \cite{bonsall1970numerical, sinclair1971norm, giles1974numerical} and also \cite[Definition 5.5]{ncpmgv2017AF}). 
However, it was shown in \cite[Corollary 4]{giles1974numerical}
that any hermitian operator on a Banach space is actually bounded. 
Thus, there is no point in asking for a Dirac-like operator 
to be both unbounded and hermitian on a general $L^p$-space. 
To bypass this situation, we will require $D$ to satisfy an analogue of
condition \eqref{cprRes} in Definition \ref{ClassicST} and also will 
require $D$ to have compact resolvent. These two conditions 
are likely not equivalent for a general unbounded operator on a Banach space. 
Thus, asking both to hold simultaneously is, in some sense, a replacement 
for selfadjointness. 
This leads to the following definition of 
an $L^p$-spectral triple, presented below. 

\begin{definition}\label{LpST}
Let $p \in [1, \infty)$. An \textit{$L^p$-spectral triple} $(A, L^p(\mu), D)$ consists of 
a unital $L^p$-operator algebra $A$ faithfully represented on $L^p(\mu)$ via $\varphi \colon A \to \Li(L^p(\mu))$, 
and an unbounded operator $D \colon \op{dom}(D) \subseteq L^p(\mu) \to L^p(\mu)$, satisfying
\begin{enumerate}
\item $(I+D^2)^{-1} \in \mathcal{K}(L^p(\mu))$, \label{LpCpt}
\item $(D-\lambda I)^{-1} \in \mathcal{K}(L^p(\mu))$ for all $\lambda \in \C \setminus \sigma(D)$, \label{LpcprResol}
\item $\{ a \in A \colon \| [D, \varphi(a)] \| < \infty\}$ is 
a norm dense subalgebra of $A$. \label{ComLpDense}
\end{enumerate}
\end{definition}

We mention that Definition \ref{LpST}, even when $p=2$, differs from Definition \ref{ClassicST}.
This is intentional, as in Definition \ref{LpST} we also want to allow the notion of spectral triples 
over non-selfadjoint $L^2$-operator algebras, generalizing this concept to general 
operator algebras.

\begin{remark}\label{CompDEFs}
A related definition, that of a (compact) Banach spectral triple $(A, X, D)$,
 can be found in \cite[Definition 5.10]{Arhancet-Kriegler-22}.
  Roughly, in this definition we have that $A$ is an 
  algebra acting as bounded linear maps on a Banach space $X$ 
via a homomorphism $\pi \colon A \to \Li(X)$ and $D$ is a 
closed unbounded bisectorial operator on $X$ (see \cite[Definition 10.6.1.]{Hyt17}) with dense domain such 
that $D$ admits a certain bounded holomorphic functional
 calculus, such that $D^{-1}$ is compact on the closure 
 of its range, and such that $\{ a \in A \colon \| [D, \pi(a)] \| < \infty\}$ is 
a norm dense subalgebra of $A$. Moreover, in their definition the kernel of the Dirac operator can be infinite-dimensional.
In particular, we have checked, using \cite[Definition 10.6.1]{Hyt17}, that the Dirac operators 
on the $L^p$-spectral 
triples presented in Theorems \ref{GdisLpT} and \ref{UHFpST} 
are both $0$-bisectorial. On the other hand, it is
 not hard to check that the compact Banach spectral
  triple $(C^\infty(\mathbb{T}), L^p(\mathbb{T}), \frac{1}{i}\frac{d}{d\theta})$
   from \cite[Theorem 8.1]{Arhancet24} is also an $L^p$-spectral triple 
   in the sense of Definition \ref{LpST}. 
\end{remark}

In the following Subsection we give a sense of the differences between our definition for $L^p$-spectral triples and that of (compact) Banach triples. In particular we exhibit an $L^2$-spectral triple whose 
Dirac operator is bisectorial but does not satisfy a sectorial bounded $H^\infty$ functional calculus. 

\subsection{Comparing $L^p$ with Compact Banach Spectral Triples}\label{ex_Judy}
Recall that a conditional basis for a Banach space $E$ is a subset $\{\xi_n \colon n \in \N\} \subseteq E$
such that 
\begin{enumerate}
\item For every $\xi \in E$ there is a unique sequence $(\beta_n)_{n \in \N}$ in $\C$ 
such that $\xi = \sum_{n \in \N} \beta_n \xi_n.$
\item There is a sequence $(\theta_n)_{n \in \N}$ in $\{-1,1\}$ and a vector $\xi \in E$ 
such that $\xi = \sum_{n \in \N} \beta_n \xi_n$, but $\sum_{n \in \N} \theta_n\beta_n \xi_n$
does not converge. 
\end{enumerate}

It is known that every separable Hilbert spaces has a conditional basis. Below we exhibit a specific conditional basis  $\{\eta_n \colon n \in \N\}$ for $\ell^2(\N)$ by employing a minor modification of the well-known conditional basis described in \cite[Proposition 2.b.11.]{LinTza77} which is originally due to Babenko \cite{Babenko48}:

\begin{definition}\label{Def_condBasis}
Let $b \colon \ell^2(\Z)  \to \ell^2(\N)$ be the Hilbert space isomorphism defined by 
\[
b(\delta_n) \coloneqq \begin{cases} 
\delta_{2n} & n \geq 0\\
\delta_{-2n-1} & n <0 
\end{cases}.
\]
Fix $a\in (0,\frac{1}{2})$, set $K \coloneqq (\frac{2}{2a+1}\pi^{2a+1})^{1/2}$, and define the orthonormal functions $\{g_n \colon n \in \N\} \subseteq L^2([-\pi,\pi]),$ as follows:
	\[
	g_{2k}(x) \coloneqq \frac{1}{K}|x|^{a}e^{inx},\;g_{2k+1}(x) \coloneqq \frac{1}{K}|x|^ae^{-ikx},\;k \in \Z
	\]
Finally, let $\mathcal{F} \colon \ell^2(\Z) \to L^2([-\pi, \pi])$ be the Fourier transform, and 
for each $n \in \N$ we define
\[
\eta_{n} \coloneqq (b \circ \mathcal{F}^{-1})(g_n).
\]
Then $\{\eta_n \colon n \in \N\}$ is a conditional basis of $\ell^2(\N)$.
\end{definition}

Further, in \cite{Haase03, Venni93} it is shown that we can always associate an unbounded operator to a conditional basis on a Hilbert space, 
a processes we describe in the following definition. 
 
\begin{definition}
Consider the Hilbert space $\mathcal{H}=\ell^2(\Z_{\geq 0})$, with a fixed conditional basis $\{\xi_n \colon n\in \N\}$. For any sequence $\alpha= (\alpha_n)_{n\in \N}$ define on $\ell^2(\N)$ a, potentially unbounded, diagonal operator $T_\alpha$ as follows 
\begin{equation}\label{eq_Talpha}
\forall n\in \mathbb N \ : \ T_\alpha(\xi_n) \coloneqq \alpha_n \xi_n.
\end{equation}
with domain
\[
\op{dom}(T_\alpha) \coloneqq \left\{ \sum_{n \in \N} \beta_n \xi_n \in \ell^2(\N) \colon  \beta_n \in \C, \ \sum_{n \in \N} \beta_n\alpha_n \xi_n \text{ converges } \right\}.
\]
\end{definition}

The following proposition exhibits an $L^2$-spectral triple, that is  
satisfying Definition \ref{LpST} for $p=2$. However, we will see that the Dirac operator in the $L^2$-spectral triple from \ref{prop:count-Hilb} is bisectorial but
does not satisfy the sectorial bounded $H^\infty$ functional calculus. 

\begin{proposition}\label{prop:count-Hilb}	Consider $\mathcal{H}=\ell^2(\N)$ and let $T \in \mathcal{B}(\Hi)$ be  be the selfadjoint diagonal operator, with respect to a fixed orthonormal basis $\{\phi_n \colon n\in \N\}$, having $\frac{1}{n+1}$ in the $n^{th}$ diagonal entry for $n\geq 0,$ and $0's$ elsewhere.  Let $A$ be the commutative $C^*$-subalgebra of ${\mathcal B}(\ell^2(\N))$ generated by $\op{Id}$ and $T$.  Let $\alpha = (2^n)_{n\in\N}$ and $D \coloneqq T_\alpha$ as in Equation \eqref{eq_Talpha} using the conditional basis from Definition \ref{Def_condBasis}. 
Then $({A}, \ell^2(\N), D )$ is an $L^2$-spectral triple according to Definition \ref{LpST}.
\end{proposition}

\begin{proof}  
 It is evident that $T$ is a selfadjoint operator with spectrum equal to $ X \coloneqq \{0,1, \frac{1}{2}, \frac{1}{3}, \cdots\}$.
By the spectral theorem $A$ can be identified with $C(X),$ and polynomials in $T$ are dense in $A$. One can also describe $A$ as the diagonal matrices with respect to the orthonormal basis $\{\phi_n \colon n \in \N\}$ whose entries consist of convergent sequences of complex numbers  $(d_n)_{n\in\N}$ that form a convergent sequence, such that $\lim_{n\to \infty} d_n\;=\;d_0.$
	
Now find a change of basis matrix $S$ from the orthonormal basis $\{\phi_n \colon n\in\N\}$ to the conditional basis $\{\eta_n \colon n\in\N\}$.	
It might not be the case that $S$ is a bounded invertible operator on $\ell^2(\N)$ but for our purposes that will not matter, because we will only need $S$ defined on a dense subspace of $\ell^2(\N)$. Indeed, recall that $D$ is defined on $\op{dom}(T_\alpha)$, a dense subspace of $\ell^2(\mathbb N)$,  by 
\[
D\Big(\sum_{n=0}^{\infty}\beta_n\eta_n \Big) = \sum_{n=0}^{\infty}2^n\beta_n\eta_n.
\]
Hence, we can also calculate the inverse change of basis matrix $S^{-1},$ which again, like $D$, might only be defined on a dense subspace of $\ell^2(\N)$.
Now let $\tilde{D}$ be the diagonal matrix whose $n^{th}$ diagonal entry, with respect to the conditional basis,  is $2^n.$  Note on a dense subset of $\ell^2(\N)$ we have 
$$\tilde{D}\Big(\sum_{n=0}^{\infty}\beta_n\eta_n\Big)=\sum_{n=0}^{\infty}2^n\beta_n\eta_n.$$
It follows that, with respect to the orthonormal basis $\{\phi_n \colon n\in \N\}$,
we have the following matrix equation for our Dirac operator $D:$ 
$$D\;=\;S^{-1}\tilde{D}S.$$
We now note that a norm-dense subalgebra $A^\infty$ of the abelian $C^*$-algebra $A$ consists of 
elements of the form $\lambda \text{Id}+M,\;\lambda\in \mathbb C,$ where $M$ is a finite rank diagonal matrix of the form $M=(m_{i,j})$ where $m_{i,j}=0$ for $i\not=j,$ and $m_{n,n}=0$ for all $n> N.$
Now clearly $[D, \lambda \text{Id}]\;=\;0,$ and using the above matrix representation for $M,$ we see that  one calculates that the commutator $[D, M]\;=\;D\circ M-M\circ D$
 is a finite rank operator (since $m_{n,n}=0$ for $n>N$), thus bounded. It follows that if 
$N=\lambda\text{Id}+M,$ where $M$ is as described above,
$$[D, N]\;=\;[D,\lambda \text{Id}]+[D,M]$$
will also be a bounded operator.
We now have that  ${A}^{\infty}$ is norm  dense in ${A}.$ So we have found our desired dense subalgebra whose elements have bounded commutators with $D$. Finally we note that for all sequences $\alpha, \gamma,$ we have
	\[
	\forall n \in \mathbb N:\ [T_\alpha, T_\gamma^n ]=0,\ \ [T_\alpha, (T_\gamma^*)^n ]=0.
	\]
One easily uses these commutator identities to show that taking in particular 
$\alpha\;=(2^n)_{n \in \N},$ 
\[
\alpha'=\Big(\frac{1}{2^n-\lambda}\Big)_{n\in\N}, \alpha''=\Big(\frac{1}{1+2^{2n}}\Big)_{n\in\N}
\]
then
$$(D-\lambda\op{Id})^{-1}=T_{\alpha'} \ \text{ and} \ (\op{Id}+D^2)^{-1}=T_{\alpha''}.$$
	Since $\alpha'$ and $\alpha''$ both have limit equal to $0,$ it follows from  \cite[Page 50]{Haase03} that $(D-\lambda\text{Id})^{-1}$ and $(\text{Id}+D^2)^{-1}$ are both the limit in norm of finite rank operators, hence compact.
	\end{proof}

We have just shown that the triple $(A, \ell^2(\mathbb N), D)$ from Proposition \ref{prop:count-Hilb} is indeed an $L^2$-spectral triple. Furthermore, we know by \cite[Section 6, Proposition 21]{Beta19}  and \cite[Page 49]{Haase03} that the operator $D$ is sectorial and, since we are working with a sectorial operator of angle less than $\pi/2$ (our angle is $0$), the second stated consequence of \cite[Definition 10.6.1.]{Hyt17} gives that $D$ is indeed bisectorial.   However, it is also shown in \cite[Section 6, Proposition 21]{Beta19}  and \cite[Page 49]{Haase03} that $D$ does, at the very least, not satisfy the sectorial bounded $H^\infty$ functional calculus. Hence we conclude that $(A, \ell^2(\mathbb N), D)$ is an $L^2$- spectral triple in the sense of Definition \ref{LpST}, but might not be a (compact) Banach spectral triple in the sense of \cite[Definition 5.10]{Arhancet-Kriegler-22}. 

\subsection{$p$-Quantum Compact Metric Spaces}\label{QpM}

We now introduce a $p$-analogue for quantum compact metric spaces. We mention that (locally) compact quantum metric spaces associated to (compact) Banach spectral triples (see Remark \ref{CompDEFs}) were also considered in \cite[Section 5]{Arhancet-Kriegler-22}.

We start by recalling the general definition of a pseudometric: 

\begin{definition}
Let $X$ be any set, a function $\op{m} \colon X \times X \to \R_{\geq 0} \cup\{\infty\}$ is 
an \textit{extended pseudometric} on $X$ if  
\begin{enumerate}
\item $\op{m}(x,x)=0$ for all $x \in X$,
\item $\op{m}(x,y)=\op{m}(y,x)$ for all $x, y \in X$, 
\item $\op{m}(x,y) \leq \op{m}(x,z)+\op{m}(z,y)$ for all $x, y, z \in X$. 
\end{enumerate}
If $\op{m}(X \times X)=\R_{\geq 0}$ then $\op{m}$ is no longer extended. 
Further, whenever $\op{m}(x,y)=0$ implies that $x=y$, then $\op{m}$ is simply a 
usual (extended) metric. 
\end{definition}

We also need to work with general definition of states on a 
unital Banach algebra, which will be used in Subsection \ref{QMUHF} where we obtain a $p$-analogue of a quantum compact metric space
for the states on an $L^p$-operator algebra. The following is Definition 2.6 in \cite{blph2020}. 

\begin{definition}\label{States}
Let $A$ be a unital Banach algebra. We define its state 
space $S(A) \subseteq A'=\Li(A, \C)$ by 
\[
S(A) \coloneqq \{\omega \in  A'  \colon \|\omega\|= \omega(1_A)=1\}.
\]
\end{definition}
A well known fact is that when $A$ is a unital C*-algebra, then $S(A)$ is the usual definition of states in $A$.

In \cite{Connes89}, given a classical spectral triple $(A, \Hi, D)$ as in Definition \ref{ClassicST} above, A.\ Connes defined an extended pseudometric $\op{mk}_D$ on $S(A)$ 
by letting 
\begin{equation}\label{ExtPM}
\op{mk}_D(\omega, \psi) \coloneqq \sup\{|\omega(a)-\psi(a)| \colon \| [D,\varphi(a)] \| \leq 1\}. 
\end{equation}
If $\op{mk}_D$ is an actual metric whose induced topology on $S(A)$ 
coincides with the weak-$*$ topology inherited from $A'$, 
then we say that the triple $(A, \Hi, D)$ is \textit{metric} and the space $S(A)$ 
is known as a \textit{quantum compact metric space}. This name is justified as $S(A)$ is the noncommutative analogue of the spectrum of $C(X)$ for a compact topological space $X$. 
Note that the formula for $\op{mk}_D$ still makes sense when $(A, L^p(\mu), D)$ is an  $L^p$-spectral triple as in Definition \ref{LpST}.  In fact, in this case the formula for $\op{mk}_D$ in Equation \eqref{ExtPM} is still an extended pseudometric on $S(A)$ by the same proof 
used in \cite[Proposition 3]{Connes89}. 
Observe that we now have the following natural definition for $p$-quantum compact metric spaces: 
\begin{definition}\label{pQMetric}
Let $p \in [1, \infty)$. An $L^p$-spectral triple $(A, L^p(\mu), D)$, as in Definition \ref{LpST}, 
is said to be \textit{metric} when $\op{mk}_D$ gives rise to the  weak-$*$ topology on $S(A)$. In such case we also say that $\op{mk}_D$ is a \textit{$p$-quantum compact metric}.
\end{definition}

M.\ A.\ Rieffel gives in \cite[Theorem 1.8]{R98} necessary and 
sufficient conditions to define
a metric $\op{d}_S$ on a set $S \subseteq B'$ (where $B$ is simply a normed vector
space) such that the $\op{d}_S$-topology agrees with the weak-$*$ topology that $S$ inherits from $B'$. 
We will use the sufficient conditions in \cite[Theorem 1.8]{R98} to produce $p$-quantum compact metrics 
on $L^p$ UHF-algebras, so we recall, for the reader's benefit,  M.\ A.\ Rieffel's setting below. 
The data consists of
\begin{enumerate}[label=(\subscript{R}{{\arabic*}})]
\item \label{R1} A normed space $B$, with norm $\| - \|_B$, over either $\C$ or $\R$.
\item \label{R2}  A subspace $\mathcal{L}$ of $B$, not necessarily closed.
\item  \label{R3} A seminorm $L$ on $\mathcal{L}$.
\item  \label{R4} A continuous (for $\|-\|_B$) linear functional, $\varphi$, on
$\mathcal{N} = \{a \in  \mathcal{L} : L(a) = 0\}$
with $\| \varphi \|= 1$. In particular, we require $\mathcal{N} \neq \{0\}$.
\item \label{R5} Let $B'=\Li(B,\C)$ and set
\[
S \coloneqq \{\omega \in  B' : \omega = \varphi \text{ on $\mathcal{N}$, and }\|\omega\|= 1\}.
\]
Thus $S$ is a norm-closed, bounded, convex subset of $B'$, and so is weak-$*$ compact.
We ask that $\mathcal{L}$ separate the points of $S$.
This means that given distinct $\omega, \psi \in  S$ there is a $b \in \mathcal{L}$ such that $\omega(b) \neq  \psi(b)$. 
\end{enumerate}
With the data in \ref{R1}-\ref{R5}, we define a function $\op{d}_S \colon S \times S \to \R_{\geq 0} \cup \infty$, by the formula 
\[
\op{d}_S(\omega, \psi) \coloneqq  \sup\{ |\omega(b) - \psi(b)|  \colon  b \in \mathcal{L},\ L(b) \leq 1 \}.
\]
The assumed data guarantees that $\op{d}_S$ is an extended metric on $S$ that separates the points of $S$. We will refer
to the topology on $S$ defined by $\op{d}_S$ as the `$\op{d}_S$-topology'. Further, $L$ descends 
to an actual norm on the quotient space $\mathcal{L}/\mathcal{N}$, but this quotient space is also equipped with the quotient norm induced by $\| - \|_B$. The latter norm 
is the one we care about, which will henceforward denote as $\| - \|_{B/\mathcal{N}}$. The following is the forward direction of Theorem 1.8 in \cite{R98}:
\begin{theorem}\label{RSuffT}
Let $B$, $\mathcal{L}$, $\mathcal{N}$, and $S$ be as in \ref{R1}-\ref{R5}. Define $\mathcal{L}_1 \subseteq \mathcal{L}$ by
\[
\mathcal{L}_1 \coloneqq \{b \in \mathcal{L} \colon L(a) \leq 1\}.
\]
If the image of $\mathcal{L}_1$ in $\mathcal{L}/\mathcal{N}$
is totally bounded for $\| - \|_{B/\mathcal{N}}$, then the $\op{d}_S$-topology on $S$ agrees with the weak-$*$ topology.
\end{theorem}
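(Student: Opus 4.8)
The plan is to deduce the result from the topological principle that a continuous bijection from a compact space onto a Hausdorff space is a homeomorphism, applied to the identity map $\op{id}\colon (S,\text{weak-}*)\to (S,\op{d}_S)$. Its source is compact: since $\varphi$ has norm $1$ on $\mathcal{N}$, any $\omega\in B'$ with $\omega|_{\mathcal{N}}=\varphi$ automatically satisfies $\|\omega\|\geq 1$, so $S=\{\omega\in B'\colon \|\omega\|\leq 1,\ \omega|_{\mathcal{N}}=\varphi\}$, which is weak-$*$ closed and sits inside the unit ball of $B'$, hence is weak-$*$ compact by Banach--Alaoglu (as already noted in the setup of \ref{R5}). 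Its target is Hausdorff: the data \ref{R1}--\ref{R5} guarantee that $\op{d}_S$ is an extended metric on $S$ separating points, and an extended metric separating points induces a Hausdorff topology. Thus the whole theorem reduces to showing that $\op{id}$ is continuous from the weak-$*$ topology to the $\op{d}_S$-topology, i.e.\ that for each $\psi_0\in S$ and each $\varepsilon>0$ there is a weak-$*$ neighborhood $U$ of $\psi_0$ in $S$ with $\op{d}_S(\omega,\psi_0)<\varepsilon$ for all $\omega\in U$.

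To build $U$, I would first record the key elementary fact that the difference of two elements of $S$ kills $\mathcal{N}$: for $\omega,\psi\in S$, $b\in\mathcal{L}$, and $n\in\mathcal{N}$ we have $(\omega-\psi)(b)=(\omega-\psi)(b-n)$, so $|(\omega-\psi)(b)|\leq\|\omega-\psi\|\,\|\overline{b}\|_{B/\mathcal{N}}\leq 2\|\overline{b}\|_{B/\mathcal{N}}$; in particular $\op{d}_S$ only sees the image of $\mathcal{L}_1$ in $B/\mathcal{N}$. Now fix $\psi_0$ and $\varepsilon$. Using the hypothesis that the image of $\mathcal{L}_1$ in $\mathcal{L}/\mathcal{N}$ is totally bounded for $\|-\|_{B/\mathcal{N}}$, choose $b_1,\dots,b_k\in\mathcal{L}_1$ whose images form an $(\varepsilon/4)$-net for $\{\overline{b}\colon b\in\mathcal{L}_1\}$, and set
\[
U\coloneqq\{\omega\in S\colon |\omega(b_j)-\psi_0(b_j)|<\tfrac{\varepsilon}{4}\text{ for }j=1,\dots,k\},
\]
which is weak-$*$ open in $S$ and contains $\psi_0$. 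Given $\omega\in U$ and any $b\in\mathcal{L}_1$, pick $j$ with $\|\overline{b}-\overline{b_j}\|_{B/\mathcal{N}}<\varepsilon/4$ and $n\in\mathcal{N}$ with $\|b-b_j-n\|_B<\varepsilon/4$; then
\[
|\omega(b)-\psi_0(b)|\leq|(\omega-\psi_0)(b-b_j-n)|+|(\omega-\psi_0)(b_j)|\leq 2\cdot\tfrac{\varepsilon}{4}+\tfrac{\varepsilon}{4}<\varepsilon,
\]
using $(\omega-\psi_0)(n)=0$. Taking the supremum over $b\in\mathcal{L}_1$ yields $\op{d}_S(\omega,\psi_0)\leq\tfrac{3\varepsilon}{4}<\varepsilon$, which is exactly the required continuity.

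Combining the pieces finishes the argument: $\op{id}$ is a continuous bijection from the compact space $(S,\text{weak-}*)$ onto the Hausdorff space $(S,\op{d}_S)$, hence a homeomorphism, so the two topologies on $S$ agree. The only genuinely delicate point I expect is the net construction in the middle paragraph: one must take the finite $(\varepsilon/4)$-net \emph{inside} $\mathcal{L}_1$ (so that the $b_j$ are legitimate test functionals determining a weak-$*$ neighborhood), which is possible precisely because a totally bounded subset of a metric space admits a finite $\delta$-net consisting of its own points; and one must bound $|(\omega-\psi_0)(b-b_j)|$ by first replacing $b-b_j$ with a small-$B$-norm representative $b-b_j-n$ rather than attempting to use $\|b-b_j\|_B$, which need not be controlled. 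Everything else---weak-$*$ compactness of $S$, the Hausdorff property of $\op{d}_S$, and the compact-to-Hausdorff homeomorphism principle---is standard.
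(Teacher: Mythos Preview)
The paper does not supply its own proof of this theorem: it is quoted verbatim as ``the forward direction of Theorem 1.8 in \cite{R98}'' and used as a black box in the proof of Theorem \ref{FinThm}. So there is nothing in the paper to compare your argument against.

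That said, your proof is correct and is essentially the standard argument (and, in outline, the one Rieffel gives). The reduction to showing that $\op{id}\colon (S,\text{weak-}*)\to (S,\op{d}_S)$ is continuous, via the compact-to-Hausdorff homeomorphism principle, is the right strategy, and your $\varepsilon/4$-net construction is clean. The two points you flag as delicate are exactly the ones that matter: that the net can be chosen inside $\mathcal{L}_1$ itself (so the $b_j$ define a genuine weak-$*$ basic neighborhood), and that one must pass to a representative $b-b_j-n$ with small $\|-\|_B$ rather than trying to bound $\|b-b_j\|_B$ directly. Both are handled correctly. One minor remark: $\mathcal{N}$ is not assumed closed in $B$, so $\|-\|_{B/\mathcal{N}}$ is a priori only a seminorm on $B/\mathcal{N}$; this does not affect your argument, since total boundedness and the net construction work verbatim in a pseudometric space, and the Hausdorff property of the $\op{d}_S$-topology comes from the separation assumption in \ref{R5} rather than from $\|-\|_{B/\mathcal{N}}$ being a norm.
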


We now have all the necessary background to present our main results, which consist of exhibiting $L^p$-spectral triples that are metric in the sense of Definition \ref{pQMetric}. We present two basic constructions, one for $L^p$-group algebras in Section \ref{Examples} and the other for UHF $L^p$-algebras in Section \ref{DOUHF}. 

\section{Quantum Metrics on $L^p$-Group Algebras}\label{Examples}

 This section 
focuses only with the $L^p$-group algebra case. 
In Subsection \ref{DOLpG} we present $L^p$-spectral triples associated with $L^p$-group algebras and length functions. In Subsection \ref{SubSCRp} we show that when the group has a length function of bounded doubling, then the associated $L^p$-spectral triple is metric.

\subsection{Dirac Operators on $F_\op{r}^p(G)$ via Length Functions.}\label{DOLpG}

We first recall the definition of length function 
(see \cite{ChristRieffel17, Connes89, LoWu21}).

\begin{definition}\label{LengthF}
A \emph{length function} on a group $G$ is a map $\LL \colon G \to \R_{\geq 0}$
satisfying the following properties 
\begin{enumerate}
\item $\LL(x)=0$ if and only if $x$ is the identity of $G$,
\item $\LL(xy) \leq \LL(x) + \LL(y)$ for all $x,y \in G$,
\item $\LL(x^{-1}) = \LL(x)$ for all $x \in G$.
\end{enumerate}
We say $\LL$ is \emph{proper}, if in addition  
\begin{enumerate}  \setcounter{enumi}{3}
\item for any $r \in \R_{\geq 0}$, $B_{\LL}(r) \coloneqq \LL^{-1}([0, r])$ is a finite subset of $G$.
\end{enumerate}
Furthermore, a proper length function $\LL$ on $G$ 
is said to be 
\begin{enumerate}
  \setcounter{enumi}{4}
  \item of \emph{bounded doubling} if there is a constant $C_{\LL}<\infty$ such that $\op{card}(B_{\LL}(2r)) \leq C_{\LL}\op{card}(B_{\LL}(r))$ for all $r \in \R_{\geq 1}$. \label{bddDoub}
  \item of \emph{bounded $t$-dilation} for a fixed $t \in \R_{\geq 1}$ if there is a constant $K_{\LL}<\infty$ such that $\op{card}(B_{\LL}(tr)) \leq K_{\LL}\op{card}(B_{\LL}(r))$ for all $r \in \R_{\geq 1}$. 
\end{enumerate}

\end{definition}
Let $p \in [1, \infty)$ and let $G$ be a countable discrete group.
For a given proper length function $\LL$ on $G$, we define $D_\LL \colon C_c(G) \to \ell^p(G)$ by letting, 
for any $\xi \in C_c(G)$ and $x \in G$, 
\begin{equation}\label{LpGDirac}
(D_\LL\xi)(x) \coloneqq \LL(x)\xi(x).
\end{equation}

For $p \in [1, \infty)$ and a countable discrete group $G$, recall from Subsection \ref{LpG} that $F_\op{r}^p(G)$ is defined as the closure of $\lambda_p(\ell^1(G))$ in $\Li(\ell^p(G))$. Thus, $F_\op{r}^p(G)$ is by definition 
an $L^p$-operator algebra, see Definition \ref{Lp}, concretely represented on $\ell^p(G)$ by its canonical inclusion map. In the following theorem  we show that $D_\LL$ is indeed a $p$-analogue of a Dirac operator, now acting on $\ell^p(G)$,  associated to 
any proper length function on a discrete group $G$, thus generalizing 
A.\ Connes' classical construction from \cite{Connes89}.

\begin{theorem}\label{GdisLpT}
Let $p \in [1, \infty)$, let $G$ be a countable discrete group, 
let $\LL$ be a proper length function on $G$ and let $D \coloneqq D_\LL$ 
as in Equation \eqref{LpGDirac}. Then, $(F^p_{\op{r}}(G), \ell^p(G), D)$ is an $L^p$-spectral triple
as in Definition \ref{LpST}. That is, 
\begin{enumerate}
\item $(I+D^2)^{-1} \in \mathcal{K}(\ell^p(G))$, \label{I+Dcpt}
\item $(D-\lambda I)^{-1} \in \mathcal{K}(\ell^p(G))$ for all $\lambda \in \C \setminus \sigma(D)$. , \label{lpGResol}
\item $\{ a \in F^p_{\op{r}}(G) \colon \| [D, a] \| < \infty\}$ is 
a norm dense subalgebra of $F^p_{\op{r}}(G)$. \label{CommDense}
\end{enumerate}
\end{theorem}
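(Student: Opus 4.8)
The plan is to verify the three conditions of Definition \ref{LpST} in order, exploiting the fact that $D_\LL$ is a diagonal multiplication operator on $\ell^p(G)$ with respect to the canonical basis $(\delta_g)_{g \in G}$, with eigenvalue $\LL(g)$ on $\delta_g$. First I would pin down the natural domain: since $\LL$ is proper, the multiplier $\LL(g)\to\infty$ as $g$ leaves finite sets, so the maximal domain $\op{dom}(D)=\{\xi\in\ell^p(G):\sum_g\LL(g)^p|\xi(g)|^p<\infty\}$ contains $C_c(G)$ densely and makes $D$ a closed, genuinely unbounded operator (unbounded precisely because $\LL$ is unbounded, which follows from properness once $G$ is infinite; the finite-group case is trivial or excluded).

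For \eqref{I+Dcpt} and \eqref{lpGResol}, the key observation is that $I+D^2$ and $D-\lambda I$ are both diagonal operators, so their inverses are diagonal too: $(I+D^2)^{-1}$ is multiplication by $(1+\LL(g)^2)^{-1}$ and, for $\lambda\notin\sigma(D)=\cj{\{\LL(g):g\in G\}}$, $(D-\lambda I)^{-1}$ is multiplication by $(\LL(g)-\lambda)^{-1}$. The spectrum is exactly the closure of the set of eigenvalues, a subset of $\R_{\geq 0}$, and for $\lambda$ outside it the resolvent multiplier is bounded. Now I would invoke the standard fact that a diagonal operator on $\ell^p(G)$ (equivalently, a Fourier multiplier supported on the "diagonal", i.e. a pointwise multiplier $\xi(g)\mapsto m(g)\xi(g)$) is compact if and only if $m(g)\to 0$ along the net of cofinite subsets; this is proved by approximating $m$ by finitely-supported multipliers, which are finite-rank, and using that $m\in c_0(G)$. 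Both $(1+\LL(g)^2)^{-1}$ and $(\LL(g)-\lambda)^{-1}$ lie in $c_0(G)$ precisely because $\LL$ is proper (each sublevel set $B_\LL(R)$ is finite, so $\LL(g)^{-2}\to 0$). Hence both resolvent-type operators are norm limits of finite-rank operators, giving compactness. I should be a little careful to state the $c_0$-multiplier-implies-compact lemma for general $\ell^p$, $p\in[1,\infty)$, rather than citing a Hilbert-space fact; the finite-rank truncation argument works verbatim.

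For \eqref{CommDense}, the subalgebra in question should be shown to contain $\lambda_p(C_c(G))$, which is dense in $F^p_{\op{r}}(G)$ by the discussion in Subsection \ref{LpG}. It suffices to check that for each $h\in G$, $[D,\lambda_p(\delta_h)]$ is bounded, since $C_c(G)$ is spanned by the $\delta_h$ and the set of $a$ with $\|[D,a]\|<\infty$ is a subalgebra (it is closed under products by the Leibniz rule $[D,ab]=[D,a]b+a[D,b]$, and under linear combinations). A direct computation on basis vectors: $\lambda_p(\delta_h)\delta_g=\delta_{hg}$, so $(D\lambda_p(\delta_h)\xi)(g)=\LL(g)\,(\lambda_p(\delta_h)\xi)(g)=\LL(g)\xi(h^{-1}g)$ while $(\lambda_p(\delta_h)D\xi)(g)=(D\xi)(h^{-1}g)=\LL(h^{-1}g)\xi(h^{-1}g)$, so $[D,\lambda_p(\delta_h)]$ is the multiplier $\xi(g)\mapsto(\LL(g)-\LL(h^{-1}g))\xi(g)$ post-composed with left translation; by the triangle inequality and symmetry of $\LL$, $|\LL(g)-\LL(h^{-1}g)|\leq\LL(h)$, so this commutator is bounded with norm at most $\LL(h)$.

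The main obstacle I anticipate is not any single deep step but rather the care needed around the unbounded-operator bookkeeping in the $L^p$ (non-Hilbert) setting: specifying the domain so that $D$ is closed and densely defined, confirming $\sigma(D)$ really is the closure of the eigenvalue set (one must check no other $\lambda$ fails to have a bounded everywhere-defined inverse — for a diagonal operator this is standard but deserves a sentence), and stating the "$c_0$-diagonal-multiplier is compact on $\ell^p$" lemma cleanly since one cannot appeal to spectral theory. None of these is hard, but they are the places where a Hilbert-space proof would cut corners that an $L^p$ proof cannot.
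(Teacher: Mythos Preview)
Your proposal is correct and, for parts \eqref{I+Dcpt} and \eqref{lpGResol}, essentially identical to the paper's proof: both argue that $(I+D^2)^{-1}$ and $(D-\lambda I)^{-1}$ are diagonal multipliers lying in $c_0(G)$ by properness of $\LL$, hence norm-approximable by finite-rank truncations. The paper writes out the truncations $K_F, J_F$ explicitly over finite $F\subset G$, which is exactly your ``$c_0$-multiplier is compact on $\ell^p$'' lemma unpacked.

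For part \eqref{CommDense} there is a mild difference worth noting. The paper takes a general $a\in C_c(G)$ and bounds $\|[D,\lambda_p(a)]\|$ directly by a quantity involving $\|a\|_1$ and $\sum_{g\in\op{supp}(a)}\LL(g)^p$. You instead reduce to the generators $\delta_h$, compute $[D,\lambda_p(\delta_h)]$ as a (translated) multiplication by $\LL(g)-\LL(h^{-1}g)$, and get the sharp Lipschitz-type bound $\|[D,\lambda_p(\delta_h)]\|\le\LL(h)$ from the triangle inequality for $\LL$. Your route is the classical Connes estimate and buys you the explicit seminorm value on generators, which is what one actually wants for quantum-metric applications; the paper's cruder bound suffices for mere finiteness. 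One small slip: your commutator is translation \emph{followed by} the multiplier (or equivalently a different multiplier followed by translation), not ``post-composed with left translation''; this is cosmetic since translation is isometric.
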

\begin{proof}
A straightforward computation shows that $\|(I+D^2)\xi\|_p \geq \|\xi\|_p$ 
for any $\xi \in C_c(G)$. Furthermore, $\ker(I+D^2)=\{0\}$ and 
\[
\cj{(I+D^2)(C_c(G))}^{\|-\|_p}=\cj{C_c(G)}^{\|-\|_p}=\ell^p(G).
\] 
Hence $I+D^2$ is continuously invertible on its range, which yields $(I+D^2)^{-1} \in \mathcal{B}(\ell^p(G))$. 

Let $\Lambda = \{ F \subseteq G \colon F \text{ is finite }\}$ ordered by inclusion. For each $F \in \Lambda$ 
we define $K_F : \ell^p(G) \to \ell^p(G)$ by letting, for each $\xi \in \ell^p(G)$ and $x \in G$, 
\[
(K_F\xi)(x) \coloneqq \frac{1}{1+\LL(x)^2}\xi(x)\chi_F(x),
\]
where $\chi_F$ is the indicator function for the set $F$. For each $F \in \Lambda$, direct computations show that $K_F$ is linear, $\|K_F\|\leq 1$, and that $K_F$ has finite rank. Moreover, since $\LL$ is proper, it 
also follows that $\lim_{F \in \Lambda} \| (I+D^2)^{-1}-K_F\|=0$, whence $(I+D^2)^{-1} \in \mathcal{K}(\ell^p(G))$, proving part \eqref{I+Dcpt}. Part \eqref{lpGResol} follows analogously 
after observing that $\sigma(D)=\LL(G)$, so that in this case the finite rank 
operator  $J_F \colon \ell^p(G) \to \ell^p(G)$ approximating $(D-\lambda I)^{-1}$ when $\lambda \not\in \sigma(D)$
is defined by
\[
(J_F\xi)(x) \coloneqq \frac{1}{\LL(x)-\lambda}\xi(x)\chi_F(x).
\]
Finally, we claim that 
\[
\lambda_p(C_c(G)) \subseteq \{ a \in F^p_{\op{r}}(G) \colon \| [D,a] \| < \infty\},
\]
where as usual $\lambda_p \colon \ell^1(G) \to \mathcal{B}(\ell^p(G))$ is the left regular representation 
(see Subsection \ref{LpG}). 
To prove such claim, take any $f \in C_c(G)$ and let $\op{supp}(f)=\{ x \in G \colon f(x) \neq 0\}$. Since $f \in C_c(G)$, 
we have $\op{supp}(f) \in \Lambda$. Then, a direct computation shows
\[
\| [D, \lambda_p(f)] \|^p = \sup_{\| \xi \|_p =1} \sum_{x \in G} \left| D(f*\xi)(x)-(f*D\xi)(x)\right|^p \leq \|f\|_1^p  \sum_{x \in \op{supp}(f)} \LL(x)^p < \infty,
\]
proving the claim. Thus, we have shown that $\{ a \in F^p_{\op{r}}(G) \colon \| [D,a] \| < \infty\}$ contains $\lambda_p(C_c(G))$ which is
already a norm dense subalgebra of $F^p_{\op{r}}(G)$, so part 
\eqref{CommDense} follows. 
\end{proof}

\subsection{$p$-Quantum Compact Metric for $F^p_{\op{r}}(G)$}\label{SubSCRp}

In this subsection we show that the main result in  \cite{ChristRieffel17}, 
still holds for any $p \in [1, \infty)$ with some modifications. 
Indeed, new techniques are needed to replace the $*$-operator, which is missing in the Banach setting: in our case the inner product will be replaced by dual $L^p$-pairing and using the H\"older inequality.

Henceforth, $\LL \colon G \to [0, \infty)$ will be a length function on $G$ as in Definition 
\ref{LengthF}, $D \coloneqq D_{\LL}$ as in Equation \eqref{LpGDirac}, and 
for any $f \in \ell^1(G)$ we let
\[
L_{D}(f) \coloneqq \| [D , \lambda_p(f)]\|_{\Li(\ell^p(G))}.
\]
Also for any $h \in \ell^\infty(G)$ we let $M_h \colon \ell^p(G) \to \ell^p(G)$ 
be the multiplication operator defined, for each $\xi \in \ell^p(G)$ and $x \in G$, by 
\[
(M_h\xi)(x)\coloneqq h(x) \xi(x).
\]
 In particular, for each $r \geq 0$ we adopt the notation
$M_r \coloneqq M_{\chi_{B_\LL(r)}}$. Then for any $k \in \Z_{\geq 0}$ and $\xi \in \ell^p(G)$ 
we have 
\[
\| D^kM_r \xi \|_{p} = \Bigg(\sum_{x \in B_\LL(r)}|\LL^k(x)\xi(x)|^p \Bigg)^{1/p}\leq r^{k}\|\xi\|_p,
\]
from where it follows that $\| D^kM_r\|_{\Li(\ell^p(G))} \leq r^k$. 
A similar computation shows that $\| D^{-k}(I-M_r)\|_{\Li(\ell^p(G))} \leq r^{-k}$.
Thus, by the same arguments used to prove Proposition 2.1 in \cite{ChristRieffel17}, 
we now obtain the following proposition.
\begin{proposition}\label{I-MLM}
Let $p \in [1, \infty)$, $f \in C_c(G)$ and $r,s \in \R$ with $s>r\geq 0$. Then 
\[
(I-M_s)\lambda_p(f) M_r = \sum_{k=0}^\infty D^{-(k+1)}(I-M_s)[D,\lambda_p(f)]D^kM_r,
\]
and therefore $\| (I-M_s)\lambda_p(f) M_r \|_{\Li(\ell^p(G))} \leq \frac{L_D(f)}{s-r}$.
\end{proposition}

Next we show an analogue of Proposition  3.1 in \cite{ChristRieffel17}. 
The statement has the obvious modifications and to prove it we will need some known facts about 
Hölder duality that we list below. 

For $p \in [1,\infty)$ we fix $q$ for its Hölder conjugate (that is $\frac{1}{p}+\frac{1}{q}=1$). 
We have the usual pairing $\ell^q(G) \times \ell^p(G) \to \C$  given by 
\[
(\eta, \xi ) \mapsto ( \eta \mid \xi ) \coloneqq \sum_{g \in G} \eta(g) \xi(g).
\]
We will often use the following well known facts: 
\begin{enumerate}
\item $\|\xi\|_p = \sup_{\| \eta\|_q=1} | ( \eta \mid \xi ) |,$ \label{dual Norm}
\item $\xi_n \to \xi$ weakly in $\ell^p(G)$ if and only if $( \eta \mid \xi_n )  \to ( \eta \mid \xi )$ for all $\eta \in \ell^q(G)$
\item $a_n \to a$ in the WOT of $\Li(\ell^p(G))$ if and only if $( \eta \mid a_n\xi ) \to ( \eta \mid a\xi ) $ for all $\xi \in \ell^p(G)$, $\eta \in \ell^q(G)$,
\item $\xi_1 = \xi_2 \in \ell^p(G)$ if and only if $( \eta \mid \xi_1 ) =( \eta \mid \xi_2 ) $ for 
all $\eta \in C_c(G)$. 
\end{enumerate}

Next, for each $y \in G$ we use $\rho_y \colon \ell^p(G) \to \ell^p(G)$ to denote right translation (that is $\rho_y(\xi)(x) \coloneqq \xi(xy^{-1})$ for all $x \in G$) and for any $h \in C_c(G)$ we define $\tilde{h}, h^* \in C_c(G)$ by letting for each $x \in G$, 
\[
\tilde{h}(x) \coloneqq h(x^{-1}) \ \text{ and } h^*(x)\coloneqq \cj{h(x^{-1})}.
\] 
We are now ready to state and prove the analogue of Proposition  3.1 in \cite{ChristRieffel17}: 

\begin{proposition}\label{pLambdaanalogue}
Let $p \in [1, \infty)$ and let $f,h,k \in C_c(G)$. 
Then
\[
\lambda_p((h^* * k)f)= \sum_{z \in G} \rho_{z^{-1}} M_{h^*} \lambda_p(f) M_{\tilde{k}} \rho_z,
\]
where the convergence is in the WOT of $\Li(\ell^p(G))$. Furthermore, 
\[
\| \lambda_p((h^* * k)f)\|_{\Li(\ell^p(G))} \leq \| \lambda_p(f)\|_{\Li(\ell^p(G))} \| h \|_q \| k \|_p.
\]
\end{proposition}
\begin{proof}
Observe that for any $\xi \in \ell^p(G)$, $\eta \in \ell^q(G)$ and $z \in G$ we have 
\[
(\eta \mid M_{\tilde{k}} \rho_z\xi ) = \sum_{x \in G} \eta(x) k(x^{-1})\xi(xz^{-1}) = 
\sum_{y \in G} \eta(yz) \tilde{k}(yz)\xi(y) = (\rho_{z^{-1}}M_{\tilde{k}}\eta \mid \xi)
\]
Thus, a computation analogous to the one done by M.\ Christ  and M.\ A.\ Rieffel (replacing $\cj{\eta}$ by $\eta$) shows that 
\[
(\eta \mid \lambda_p((h^* * k)f) \xi) = \Bigg( \eta { \ }\Bigg|{ \ } \sum_{z \in G} \rho_{z^{-1}} M_{h^*} \lambda_p(f) M_{\tilde{k}} \rho_z\xi \Bigg).
\]
The norm bound does have a slightly different proof that needs Hölder duality. 
To see this, take $\xi \in \ell^p(G)$ and $\eta \in \ell^p(G)$.
Define $g_{\xi, k} \colon G \to \R$ and $g_{\eta, h} \colon  G \to \R$
by 
\[
g_{\xi, k}(z)\coloneqq \|  M_{\tilde{k}} \rho_z \xi\|_p, \  g_{\eta, h}(z) \coloneqq \| M_{\tilde{h}} \rho_z \eta\|_q. 
\]
We claim that $g_{\xi, k} \in \ell^p(G)$ and $g_{\eta, h} \in \ell^q(G)$. 
Indeed, 
\[
\|g_{\xi, k}\|_p^p=\sum_{z \in G} \|M_{\tilde{k}} \rho_z \xi\|_p^p = \sum_{z \in G} \sum_{x \in G} | k(x^{-1})\xi(xz^{-1})|^p
=\| \xi\|_p^p \| k \|_p^p. 
\]
An analogous computation shows that $\| g_{\eta, h}  \|_q^q = \| \eta\|_q^q \|h\|_q^q$, so our claim now follows. 
Then, two applications of the Hölder's inequality 
now give 
 \begin{align*}
| (\eta \mid \lambda_{(h^* * k)f} \xi) | & \leq \sum_{z \in G} |(M_{\tilde{h}} \rho_z \eta \mid \lambda_p(f) M_{\tilde{k}} \rho_z \xi)| \\
& \leq \| \lambda_p(f)\|_{\Li(\ell^p(G))} \|g_{\xi, k}\|_p\| g_{\eta, h}  \|_q  \\
& = \| \lambda_p(f)\|_{\Li(\ell^p(G))} \|\xi\|_p \|k\|_p\|\eta\|_q\| h  \|_q.
 \end{align*}
 Hence, this combined with \eqref{dual Norm} gives $\|  \lambda_p((h^* * k)f)\xi \|_p \leq \| \lambda_p(f)\|_{\Li(\ell^p(G))} \|\xi\|_p \|k\|_p\| h  \|_q$, from which it now follows at once that 
 \[
 \| \lambda_p((h^* * k)f)\|_{\Li(\ell^p(G))} \leq \| \lambda_p(f)\|_{\Li(\ell^p(G))} \| h \|_q \| k \|_p,
 \]
 as wanted. 
\end{proof}

We now obtain the $p$-version of Proposition 3.2 in \cite{ChristRieffel17}: 

\begin{proposition}
Let $p \in [1, \infty)$ and let $f,h,k \in C_c(G)$. 
Then
\[
L_D ( \  (h^**k)f \ ) \leq \| h \|_q \| k \|_p L(f).
\]
\end{proposition}
\begin{proof}
This follows again by careful applications of Hölder's inequality as in the proof of Proposition \ref{pLambdaanalogue} above.
\end{proof}

Following Definition 3.3 of \cite{ChristRieffel17} we define a seminorm $J_D$ on $C_c(G)$ 
by 
\[
J_D(f) \coloneqq  \sup\{ r \| (I - M_{2r})\lambda_p(f) M_r \|_{\Li(\ell^p(G))} \colon  r \geq 0 \}.
\]
Then, Proposition \ref{I-MLM} shows that $J_D \leq L_D$ on $C_c(G)$. Our norm estimates 
paralleling Propositions 3.5 and 3.6 in \cite{ChristRieffel17} are 
respectively given by 
\begin{align*}
\| \lambda_p((h^* * k)f)\| & \leq r^{-1} \| h \|_q \| k \|_p J_D(f)\\ 
J_D( \  (h^**k)f \  ) & \leq \| h \|_q \| k \|_p J_D(f). 
\end{align*}
Similarly, the analogue of  Corollary 3.7 in \cite{ChristRieffel17}
now says that if $r>0$ and $E \subseteq B_\LL(r)$, $F \subseteq G\setminus B_\LL(2r)$
then putting $k=\chi_E$ and $h=\chi_{F}$ yields,  
\begin{align*}
\| \lambda_p((h^* * k)f)\| & \leq r^{-1} \op{card}(F)^{1/q}  \op{card}(E)^{1/p} J_D(f)\\ 
J_D( \  (h^**k)f \  ) & \leq \op{card}(F)^{1/q}  \op{card}(E)^{1/p} J_D(f). 
\end{align*}

Let $A(s,t) = B_\LL(t) \setminus B_\LL(s)$ for $t>s>0$. If $s>2r>0$, 
$k=\op{card}(B_\LL(r))^{-1}\chi_{B_\LL(r)}$, $h=\chi_{A(s,t)}$, and $g=h^**k$, 
the $p$-version of  Corollary 3.9 in \cite{ChristRieffel17} is that 
for any $f \in C_c(G)$
\[
\| \lambda_p(gf)\| \leq r^{-1} (\op{card}(B_\LL(r))^{-1}\op{card}(B_\LL(t)))^{1/q}J_D(f)
\]
and
\[
J_D(gf) \leq  (\op{card}(B_\LL(r))^{-1}\op{card}(B_\LL(t)))^{1/q}J_D(f).
\]
Therefore, imposing the assumption that the length $\mathbb{L}$ on group $G$ is of bounded doubling (see part \eqref{bddDoub} in Definition \ref{LengthF}), the results in \cite[Section 4]{ChristRieffel17} are almost identical in this case, but now using the alternative constant $C_1'=C_{\LL}^{K/q}$. It now follows that 
 \[
 B_J=\{\lambda_p(f) \colon f \in C_c(G), f(1_G)=0, J_D(f)\leq 1\}
 \]
 is totally bounded with respect to the norm in $F_r^p(G)$. Altogether, we have shown the following main result:

\begin{theorem}\label{thm_Gdb_}
Let $p \in [1, \infty)$, let $G$ be a countable discrete group, and let $\LL \colon G \to [0,\infty)$ be a 
length function of bounded doubling on $G$. Let $D=D_{\LL}$ be the associated 
multiplication operator. Then the seminorm $L_D(f) \coloneqq \| [D,\lambda_p(f)]\|$ 
metrizes the weak-* topology of $S(F_r^p(G))$.
\end{theorem}

In terms of Definition \ref{pQMetric} we have shown the following result:
\begin{corollary}
Let $p \in [1, \infty)$, let $G$ be a countable discrete group, and let $\LL \colon G \to [0,\infty)$ be a 
length function of bounded doubling on $G$. 
Then $(F_r^p(G), \ell^p(G), D_{\LL})$ is a metric $L^p$-spectral triple. 
\end{corollary}

\section{Quantum Metrics on $L^p$ UHF-Algebras}\label{DOUHF}

In this section we define $L^p$-spectral triples for certain $L^p$ UHF-algebras and, in Subsection \ref{QMUHF} we show that we can actually construct $p$-quantum 
compact metrics on them. The results are motivated largely by the well-known construction by Christensen-Ivan of spectral triples, and in particular Dirac operators, on 
AF C*-algebras (see \cite{CI}). 

\subsection{Dirac Operator on $L^p$ UHF-Algebras.}\label{DOUHF_ST}

Here we construct an $L^p$-analogue of the Christensen-Ivan triple for a general 
$L^p$ UHF-algebra of tensor product type as in defined in  
\cite[Definition 3.9]{NCP13_2}. 

Fix $p \in [1, \infty)$. First of all we recall from \cite{NCP13_2} the construction of $A(d, \rho)$, the $L^p$ UHF-algebra of tensor product type associated to a sequence $d \colon \Z_{\geq 0} \to \Z_{\geq 2}$ and to $\rho = (\rho_j)_{j=0}^\infty$, a sequence of unital representations $\rho_j \colon M_{d(j)}(\C) \to \mathcal{B}(L^p(\mu_j))$ on 
probability spaces $(X_j, \mathfrak{M}_j, \mu_j)$. The following construction is taken from Example 3.8 
in \cite{NCP13_2}, but we are using a slightly different notation in 
order to compare it to the C*-case from \cite{CI}. 

\begin{definition}\label{Prob_leq_>}
Let $(X_j, \mathfrak{M}_j, \mu_j)_{j=0}^\infty$
be a sequence of probability spaces. We define
the following product probability spaces:
\begin{enumerate}
\item  \label{FinProd} For each $m \in \Z_{\geq 0}$
\[
(X_{\leq m}, \mathfrak{M}_{\leq m}, \mu_{\leq m}) \coloneqq \prod_{j=0}^m (X_j, \mathfrak{M}_j, \mu_j).
\]
\item \label{InfProd} For each $n \in \Z_{\geq 0}$ we define 
\[
(X_{\geq n}, \mathfrak{M}_{\geq n}, \mu_{\geq n}) \coloneqq  \prod_{j=n}^\infty (X_j, \mathfrak{M}_j, \mu_j). 
\]
\item \label{IntervalProd} For $n \in \Z_{\geq -1}$ and $m \in \Z_{\geq n}$
we define 
\[
(X_{(n,m]}, \mathfrak{M}_{(n,m]}, \mu_{ (n,m]}) \coloneqq  \prod_{j=n+1}^m (X_j, \mathfrak{M}_j, \mu_j).
\]
Note that for every 
$n \in \Z_{\geq 0}$, the space $(X_{(n,n]}, \mathfrak{M}_{(n,n]}, \mu_{ (n,n]})$ is simply an empty product of probability spaces, which is understood as a one point space with counting measure.
\end{enumerate}
\end{definition}
Observe that for $p \in [1, \infty)$ and any $n \in \Z_{\geq 0}$ we have an obvious isometric 
identification 
as in Equation \eqref{LpTensor}
\begin{equation}\label{tensorDecomp}
L^p(\mu_{\geq 0}) = L^p(\mu_{\leq n}) \otimes_p  L^p(\mu_{\geq n+1}).
\end{equation}
We will use $\op{Id}_{\leq n}$ and $\op{Id}_{\geq n}$ to respectively denote the identity operators on $L^p(\mu_{\leq n})$ and $L^p(\mu_{\geq  n})$. 
Next, for each $n \in \Z_{\geq 0}$, we define
\[
M_{n} \coloneqq  \bigotimes_{j=0}^n M_{d(j)}(\C).
\]
Observe that $M_n$ is algebraically isomorphic to $M_{d(0)d(1)\cdots d(n)}(\C)$. 
Now, the sequence $\rho=(\rho_j)_{j=0}^\infty$ together with part \eqref{LpTensorOp} in Theorem \ref{SpatialTP} induces 
a unique unital representation $\rho_{\leq n} \colon M_n \to \mathcal{B}(L^p(\mu_{\leq n}))$ via 
\[
\rho_{\leq n} (a_0 \otimes a_1 \otimes \cdots \otimes a_n) \coloneqq  \rho_0(a_0)\otimes \rho_1(a_1) \otimes \cdots \otimes \rho_n(a_n),
\]
where $a_j \in M_{d(j)(\C)}$ for each $j \in \{0, \ldots, n\}$. 
This allows us to equip each $M_n$ with the norm $\| a \| = \| \rho_{\leq n}(a)  \|$, 
making $M_n$ an $L^p$-operator algebra isometrically represented on $L^p(\mu_{\leq n})$. 
Further, for any $n,m \in \Z_{\geq 0}$ with $n<m$ we have obvious isometric
homomorphisms $\varphi_{m,n} \colon M_n \to M_m$ obtained by filling in tensor 
factors with corresponding identity matrices. To be precise we have 
\[
\varphi_{m,n}(a_0 \otimes \ldots \otimes a_n) = a_0 \otimes \ldots \otimes a_n \otimes 1_{d(n+1)} \otimes \cdots \otimes 1_{d(m)},
\]
where for each $j \in \Z_{\geq 0}$ the element $1_{d(j)} \in M_{d(j)}(\C)$ is the $d(j) \times d(j)$ identity matrix. This gives rise to a unital isometric representation 
of $M_{n}$ on $L^p(\mu_{\leq m})$ by defining $\rho_{m,n} \colon M_n \to \mathcal{B}(L^p(\mu_{\leq m}))$ by \[
\rho_{m,n} \coloneqq \rho_{\leq m} \circ \varphi_{m,n}.
\] 
Next, with the identification in Equation \eqref{tensorDecomp}, we 
get a unital isometric representation of $M_n$ on $L^p(\mu_{\geq 0})$, 
denoted as $\rho_{\infty, n} \colon M_n \to \mathcal{B}(L^p(\mu_{\geq 0}))$, 
by letting 
\[
\rho_{\infty, n}(a) \coloneqq  \rho_{\leq n}(a) \otimes \op{Id}_{\geq n+1}.
\]
Finally, for each $n \in \Z_{\geq 0}$ we define \[
A_n \coloneqq \rho_{\infty, n}(M_n) \subseteq \mathcal{B}(L^p(\mu_{\geq 0})), 
\]
and then put 
\begin{equation}\label{UHFLP}
 A(d,\rho) \coloneqq  \cj{ \bigcup_{n=0}^\infty A_n }  \subseteq \mathcal{B}(L^p(\mu_{\geq 0})).
\end{equation}

\begin{remark}\label{ConcrRepnUHF}
Observe that for each $n \in \Z_{\geq 0}$, $A_n$ is an $L^p$-operator algebra, see Definition \ref{Lp}, concretely represented on $L^p(\mu_{\geq 0})$ by its canonical inclusion map. The same is true for $A(d,\rho)$, so in the sequel we always assume that these algebras act on $L^p(\mu_{\geq 0})$ with no need 
of explicitly giving a representation. 
\end{remark}

Our main goal is to define a Dirac operator on certain algebras of the form $A(d, \rho)$. To do so, we first prove a structural result for the measure spaces involved, Proposition \ref{iotas_pis}, which in particular highlights the structure of subspaces and corresponding projections.

We start by defining, for each $n \in \Z_{\geq 0}$, 
an inclusion map  $\iota_n \colon L^p(\mu_{\leq n}) \hookrightarrow L^p(\mu_{\geq 0})$ and a 
projection map $\pi_n \colon L^p(\mu_{\geq 0}) \twoheadrightarrow L^p(\mu_{\leq n})$. 

\begin{definition}\label{iotaspisPs}
Fix $p \in [1, \infty)$, let $(X_j, \mathfrak{M}_j, \mu_j)_{j=0}^\infty$
be a sequence of probability spaces, and let $(X_{\leq n}, \mathfrak{M}_{\leq n}, \mu_{\leq n}) $ and 
$(X_{\geq n}, \mathfrak{M}_{\geq n}, \mu_{\geq n})$ be as in Definition \ref{Prob_leq_>}.  For each $n \in \Z_{\geq 0}$ we define 
\begin{enumerate}
\item \label{Incl_n} $\iota_n \colon L^p(\mu_{\leq n}) \to L^p(\mu_{\geq 0})$ so that if $\xi \in L^p(\mu_{\leq n})$, $(x_0, \ldots, x_n) \in X_{\leq n}$, and $\vf{x} \in X_{\geq n+1}$, then 
\[
(\iota_n\xi)(x_0, \ldots, x_n, \vf{x}) \coloneqq  \xi(x_0, \ldots, x_n).
\]
\item \label{Proj_n} $\pi_n \colon  L^p(\mu_{\geq 0}) \to L^p(\mu_{\leq n})$
by putting, for every $\eta \in L^p(\mu_{\geq 0})$ and $(x_0, \ldots, x_n) \in X_{\leq n}$, 
\[
(\pi_n\eta)(x_0, \ldots, x_n) \coloneqq  \int_{X_{\geq n+1}} \eta(x_0, \ldots, x_n, \vf{x}) d\mu_{\geq n+1}(\vf{x}) 
\]
\item \label{Pprojns} $P_n \colon L^p(\mu_{\geq 0}) \to  L^p(\mu_{\geq 0})$ by $P_n \coloneqq  \iota_n \circ \pi_n$. 
\end{enumerate}
\end{definition}
We now describe the properties of the maps $\iota_n, \pi_n,$ and $P_n$. 
\begin{proposition}\label{iotas_pis}
Fix $p \in [1, \infty)$, let $(X_j, \mathfrak{M}_j, \mu_j)_{j=1}^\infty$
be a sequence of probability spaces, and let $(X_{\leq n}, \mathfrak{M}_{\leq n}, \mu_{\leq n}) $ and 
$(X_{> n}, \mathfrak{M}_{> n}, \mu_{> n})$ be as in Definition \ref{Prob_leq_>}.  For each $n \in \Z_{\geq 0}$, consider the maps $\iota_n, \pi_n,$ and $P_n$
defined in Definition \ref{iotaspisPs} above. 
Then, 
\begin{enumerate}
\item $\iota_n \in \mathcal{B}(L^p(\mu_{\leq n}) , L^p(\mu_{\geq 0}))$ and $\iota_n$ is an isometry. \label{iotaISOM}
\item $\pi_n \in \mathcal{B}(L^p(\mu_{\geq 0}), L^p( \mu_{\leq n}) )$ and $\|\pi_n\| \leq 1$.
\label{Picontr} 
\item $P_n \in \mathcal{B}(L^p(\mu_{\geq 0}))$ and if $m \in \Z_{\geq n}$, then  $P_nP_m=P_mP_n=P_n$. In particular this gives  $P_n^2=P_n$ and $\|P_n\| = 1$. 
\label{Pprojnsbdd}
\item $\pi_n \circ \iota_n = \op{Id}_{\leq n} \in \Li(L^p(\mu_{\leq n}))$. In particular, $\iota_n$ is injective and $\pi_n$ is surjective. 
\label{pi_iot=1}
\end{enumerate}
\end{proposition}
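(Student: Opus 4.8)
The plan is to verify each of the four claims in Proposition~\ref{iotas_pis} by direct estimates on the defining formulas in Definition~\ref{iotaspisPs}, relying on the tensor-product identification \eqref{tensorDecomp} together with part~\eqref{LpTensorNorm} of Theorem~\ref{SpatialTP} and the fact that every $\mu_j$ is a \emph{probability} measure. First, for part~\eqref{iotaISOM}, I would compute $\|\iota_n\xi\|_p^p$ by Fubini: since $(\iota_n\xi)(x_0,\dots,x_n,\vf{x})=\xi(x_0,\dots,x_n)$ is independent of $\vf{x}\in X_{\geq n+1}$, integrating over $X_{\geq 0}=X_{\leq n}\times X_{\geq n+1}$ gives $\int_{X_{\leq n}}|\xi|^p\,d\mu_{\leq n}\cdot\mu_{\geq n+1}(X_{\geq n+1})$, and the second factor is $1$ because $\mu_{\geq n+1}$ is a product of probability measures; hence $\|\iota_n\xi\|_p=\|\xi\|_p$. (Equivalently, $\iota_n=\op{Id}_{\leq n}\otimes\, e$ where $e\colon\C\to L^p(\mu_{\geq n+1})$ sends $1$ to the constant function $\mathbf 1$, which has norm $1$, so one may invoke Theorem~\ref{SpatialTP}\eqref{LpTensorOp} directly.)

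For part~\eqref{Picontr}, the map $\pi_n$ sends $\eta$ to the conditional-expectation-type integral $\int_{X_{\geq n+1}}\eta(x_0,\dots,x_n,\vf{x})\,d\mu_{\geq n+1}(\vf{x})$. I would bound $\|\pi_n\eta\|_p$ by applying Jensen's inequality (valid since $\mu_{\geq n+1}$ is a probability measure) pointwise in $(x_0,\dots,x_n)$: $\bigl|\int\eta\,d\mu_{\geq n+1}\bigr|^p\leq\int|\eta|^p\,d\mu_{\geq n+1}$, then integrate over $X_{\leq n}$ and use Fubini to get $\|\pi_n\eta\|_p^p\leq\|\eta\|_p^p$. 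This also implicitly shows $\pi_n\eta$ is well-defined and measurable (the inner integral is finite for $\mu_{\leq n}$-a.e.\ point, again by Fubini applied to $\eta\in L^p\subseteq L^1$ on the probability space). So $\pi_n$ is a contraction.

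For part~\eqref{pi_iot=1}, I would just plug the formula for $\iota_n$ into the formula for $\pi_n$: for $\xi\in L^p(\mu_{\leq n})$ we get $(\pi_n\iota_n\xi)(x_0,\dots,x_n)=\int_{X_{\geq n+1}}\xi(x_0,\dots,x_n)\,d\mu_{\geq n+1}(\vf{x})=\xi(x_0,\dots,x_n)$ since the integrand is constant in $\vf{x}$ and $\mu_{\geq n+1}(X_{\geq n+1})=1$; thus $\pi_n\circ\iota_n=\op{Id}_{\leq n}$, which forces $\iota_n$ injective and $\pi_n$ surjective. Part~\eqref{Pprojnsbdd} then follows from the previous three: $P_n=\iota_n\pi_n$ is bounded with $\|P_n\|\leq\|\iota_n\|\,\|\pi_n\|\leq1$, and using $\pi_n\iota_n=\op{Id}_{\leq n}$ gives $P_n^2=\iota_n(\pi_n\iota_n)\pi_n=\iota_n\pi_n=P_n$, while $\|P_n\|\geq1$ because $P_n$ restricted to the range of $\iota_n$ acts as the identity on an isometric copy of $L^p(\mu_{\leq n})$ (e.g.\ $P_n\iota_n=\iota_n$ and $\iota_n$ is isometric), so $\|P_n\|=1$. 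For the relation $P_nP_m=P_mP_n=P_n$ when $n\leq m$, I would unwind the tensor picture: under \eqref{tensorDecomp} one has $\pi_n = (\op{Id}_{\leq n}\otimes\Phi_{(n,m]})\circ\pi_m$-type compatibilities, or more directly compute both composites on a function $\eta$ using Fubini, noting that integrating out $X_{\geq n+1}$ can be done in two stages ($X_{(n,m]}$ then $X_{\geq m+1}$) and that inserting $\iota_m$ followed by $\pi_n$ collapses to integrating out all of $X_{\geq n+1}$; the constant-in-later-coordinates structure of $\iota_m$'s output makes the intermediate integrals trivial. The only mild obstacle throughout is bookkeeping with Fubini and the product-measure decompositions $X_{\geq 0}=X_{\leq n}\times X_{(n,m]}\times X_{\geq m+1}$ to check the nesting relations in part~\eqref{Pprojnsbdd}; each individual estimate is routine once the probability-measure normalization is used.
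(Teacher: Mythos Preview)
Your proposal is correct and follows essentially the same route as the paper: Fubini plus the probability-measure normalization for part~\eqref{iotaISOM}, Jensen plus Fubini for part~\eqref{Picontr}, a direct substitution for part~\eqref{pi_iot=1}, and a Fubini computation using the decomposition $X_{\geq 0}=X_{\leq n}\times X_{(n,m]}\times X_{\geq m+1}$ for the nesting relation in part~\eqref{Pprojnsbdd}. Your ordering (deriving $P_n^2=P_n$ and $\|P_n\|=1$ from parts~\eqref{iotaISOM}, \eqref{Picontr}, \eqref{pi_iot=1} rather than as a special case of $P_nP_m=P_n$) is a harmless reshuffling, and your explicit justification of $\|P_n\|\geq 1$ via $P_n\iota_n=\iota_n$ is in fact slightly more detailed than what the paper writes out.
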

\begin{proof}
It is clear that $\xi \mapsto \iota_n \xi$ is linear. 
Since we are working with probability spaces, an application of Fubini's theorem gives
\[
\int_{X_{ \geq 0}} |(\iota_n\xi)(x_0, \ldots, x_n, \vf{x})|^p d\mu_{\geq 0}(x_0, \ldots, x_n, \vf{x})
= 
\int_{X_{\leq n}} |\xi|^p d\mu_{\leq n},
\]
proving that $\iota_n \colon L^p(\mu_{\leq n}) \to  L^p(\mu_{\geq 0})$ is indeed isometric, 
whence part \eqref{iotaISOM} follows. 
Next, it is also clear that $\eta \mapsto \pi_n \eta$ is a linear map. Moreover, 
Jensen's inequality and Fubini's theorem give
\begin{align*}
\| \pi_n \eta \|_p^p & = \int_{X_{\leq n}} \left| \int_{X_{\geq n+1}} \eta(x_0, \ldots, x_n, \vf{x}) d\mu_{\geq n+1}(\vf{x}) \right|^pd\mu_{\leq_n}(x_0, \ldots, x_n)\\
& \leq \int_{X_{\leq n}}\left( \int_{X_{\geq n+1}} |\eta(x_0, \ldots, x_n, \vf{x})|^p d\mu_{\geq n+1}(\vf{x}) \right)d\mu_{\leq n}(x_0, \ldots, x_n) \\
& = \int_{X_{\geq 0}} |\eta|^p d\mu_{\geq 0}\\
& = \| \eta \|_p^p.
\end{align*}
Therefore $\| \pi_n\| \leq 1$, proving part \eqref{Picontr}. 
For part \eqref{Pprojnsbdd}, Fubini's theorem together with $n\leq m$, 
shows that, for any $\eta \in L^p(\mu_{\geq 0})$, 
$(x_0, \ldots, x_n) \in X_{\leq n}$, and $\vf{x} \in X_{\geq n+1}$, 
\begin{align*}
(P_nP_m\eta)(x_0, \ldots, x_n, \vf{x}) & = \int_{X_{(n,m]}} (\pi_m \eta)(x_0, \ldots, x_n, \vf{y}) d\mu_{(n,m]}(\vf{y}) \\
& = \int_{X_{\geq m+1}}\left( \int_{X_{(n,m]}} \eta(x_0, \ldots, x_n, \vf{y}, \vf{z}) d\mu_{(n,m]}(\vf{y}) \right) d\mu_{\geq m+1}(\vf{z})\\
 & = \int_{X_{\geq n+1}} \eta(x_0, \ldots, x_n, \vf{w}) d\mu_{\geq n+1}(\vf{w}) \\
& = 
(\pi_n\eta)(x_0, \ldots, x_n) \\
& = (P_n\eta)(x_0, \ldots, x_n, \vf{x}).
\end{align*}
This gives $P_nP_m=P_n$. A straightforward computation shows that $m \geq n$ also implies $P_mP_n=P_n$, 
so part \eqref{Pprojnsbdd} is proved. Finally, to show part \eqref{pi_iot=1}, take any 
$\xi \in L^p(\mu_{\leq n})$ and any $(x_0, \ldots, x_n) \in X_{\leq n}$, and compute 
\[
((\pi_n \circ \iota_n)\xi)(x_0, \ldots, x_n) 
= 
\int_{X_{\geq n+1}} (\iota_n\xi)(x_0, \ldots, x_n, \vf{x}) d\mu_{\geq n+1}(\vf{x}) 
= \xi(x_0, \ldots, x_n).
\]
 That is, $(\pi_n \circ \iota_n)\xi=\xi$, finishing the proof. 
\end{proof}

The following lemma will be needed to produce $L^p$-analogues
of spectral triples on UHF $L^p$-operator algebras. 

\begin{lemma}\label{aP=Pa}
Let $A \coloneqq A(d,\rho) \subseteq \Li(L^p(\mu_{\geq 0}))$ be an UHF $L^p$-operator algebra as defined in Equation \eqref{UHFLP} 
and, for each $n \in \Z_{\geq 0}$, 
let $P_n \in \Li(L^p(\mu_{\geq 0}))$ be as in part \eqref{Pprojns} 
of Definition \ref{iotaspisPs}. If $m, n \in \Z_{\geq 0}$ are such that $m\geq n$ and $a \in A_n  \subseteq \Li(L^p(\mu_{\geq 0}))$, then $aP_m=P_ma$. 
\end{lemma}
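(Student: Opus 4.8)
The plan is to realize both $a$ and $P_m$ concretely as operators on a threefold spatial tensor decomposition of $L^p(\mu_{\geq 0})$ adapted to the pair $n \leq m$, and then finish with a short computation using the multiplicativity of the tensor product of operators from Theorem \ref{SpatialTP}\eqref{LpTensorOpProp}. Since $A_n = \rho_{\infty,n}(M_n)$ by definition, it suffices to treat $a = \rho_{\infty,n}(c)$ for an arbitrary $c \in M_n$. Iterating the isometric identification of Equation \eqref{LpTensor} (and using the one-point-space convention of Definition \ref{Prob_leq_>}\eqref{IntervalProd}, so that the degenerate case $m = n$ is covered), I would write
\[
L^p(\mu_{\geq 0}) = L^p(\mu_{\leq n}) \otimes_p L^p(\mu_{(n,m]}) \otimes_p L^p(\mu_{\geq m+1}).
\]
Unwinding $\rho_{\infty,n}(c) = \rho_{\leq n}(c) \otimes \op{Id}_{\geq n+1}$ and splitting $\op{Id}_{\geq n+1} = \op{Id}_{(n,m]} \otimes \op{Id}_{\geq m+1}$ (immediate from the uniqueness clause of Theorem \ref{SpatialTP}\eqref{LpTensorOp}, or by evaluating on elementary tensors) presents $a$ as
\[
a = b \otimes \op{Id}_{(n,m]} \otimes \op{Id}_{\geq m+1}, \qquad b \coloneqq \rho_{\leq n}(c) \in \Li(L^p(\mu_{\leq n})).
\]

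Next I would identify $P_m$ on the same decomposition. Let $Q \colon L^p(\mu_{\geq m+1}) \to L^p(\mu_{\geq m+1})$ be given by $Q\zeta \coloneqq \big(\int_{X_{\geq m+1}} \zeta \, d\mu_{\geq m+1}\big)\, \mathbf{1}_{\geq m+1}$, where $\mathbf{1}_{\geq m+1}$ denotes the constant function $1$; since $\mu_{\geq m+1}$ is a probability measure, $\mathbf{1}_{\geq m+1} \in L^p(\mu_{\geq m+1})$, Jensen's inequality gives $\|Q\| \leq 1$, and one checks $Q^2 = Q$. Evaluating on elementary tensors $\xi \otimes \zeta$ with $\xi \in L^p(\mu_{\leq m})$ and $\zeta \in L^p(\mu_{\geq m+1})$, the defining formulas for $\iota_m$ and $\pi_m$ in Definition \ref{iotaspisPs} give directly $P_m(\xi \otimes \zeta) = \xi \otimes Q\zeta$; since such tensors are dense by Theorem \ref{SpatialTP}\eqref{denseTensor_p} and both sides are bounded operators (Proposition \ref{iotas_pis}\eqref{Pprojnsbdd}), this forces $P_m = \op{Id}_{\leq m} \otimes Q = \op{Id}_{\leq n} \otimes \op{Id}_{(n,m]} \otimes Q$.

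Finally, by associativity and multiplicativity of the tensor product of operators (Theorem \ref{SpatialTP}\eqref{LpTensorOpProp}),
\[
aP_m = (b\,\op{Id}_{\leq n}) \otimes \op{Id}_{(n,m]} \otimes (\op{Id}_{\geq m+1}\,Q) = b \otimes \op{Id}_{(n,m]} \otimes Q,
\]
\[
P_m a = (\op{Id}_{\leq n}\,b) \otimes \op{Id}_{(n,m]} \otimes (Q\,\op{Id}_{\geq m+1}) = b \otimes \op{Id}_{(n,m]} \otimes Q,
\]
so $aP_m = P_m a$, as claimed. The only genuinely delicate point is the bookkeeping in the first two steps: making the threefold decomposition and the identities $\op{Id}_{\geq n+1} = \op{Id}_{(n,m]} \otimes \op{Id}_{\geq m+1}$ and $P_m = \op{Id}_{\leq m} \otimes Q$ rigorous, and verifying that nothing degenerates badly when $m = n$ (then the middle factor is the one-point space, $\op{Id}_{(n,n]}$ is the identity on $\C$, and everything collapses to $a = b \otimes \op{Id}_{\geq n+1}$ and $P_n = \op{Id}_{\leq n} \otimes Q$). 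Once these identifications are in place, the commutation is automatic, since on each of the three tensor factors at least one of $a$, $P_m$ acts as the identity.
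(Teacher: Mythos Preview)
Your proof is correct and follows essentially the same approach as the paper's. The paper works with the twofold decomposition $L^p(\mu_{\geq 0}) = L^p(\mu_{\leq n}) \otimes_p L^p(\mu_{\geq n+1})$ and introduces an auxiliary operator $P_{m,n} \in \Li(L^p(\mu_{\geq n+1}))$ (built from maps $\iota_{n,m}$, $\pi_{m,n}$ analogous to $\iota_m$, $\pi_m$) satisfying $P_m(\eta_1 \otimes \eta_2) = \eta_1 \otimes P_{m,n}\eta_2$; your threefold decomposition simply unpacks $P_{m,n}$ one step further as $\op{Id}_{(n,m]} \otimes Q$, and the closing computation is then identical in substance.
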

\begin{proof}
Fix $m,n \in \Z_{\geq 1}$ with $n \leq m$. In analogy with the map $\iota_n$ defined in Definition \ref{iotaspisPs} we also get a map $\iota_{n,m} \colon L^p(\mu_{(n,m]}) \to L^p(\mu_{\geq n+1})$ (see Definition \ref{Prob_leq_>}) defined as follows: 
for $\xi \in L^p(\mu_{(n,m]})$, $(x_{n+1}, \ldots, x_m) \in X_{(n,m]}$, 
and $\vf{x} \in X_{\geq m+1}$ we put
\[
(\iota_{n,m}\xi)(x_{n+1}, \ldots, x_m, \vf{x}) \coloneqq   \xi(x_{n+1}, \ldots, x_m).
\]
If $n=m$, we set $(\iota_{m,m}\xi)(\vf{x})=1$ for all  $\vf{x} \in X_{\geq m+1}$.
Similarly, as with $\pi_n$, we now define $\pi_{m,n} \colon L^p(\mu_{\geq n+1}) \to L^p(\mu_{(n,m]})$ so that for $\zeta \in  L^p(\mu_{\geq n+1})$ and $(x_{n+1}, \ldots, x_m) \in X_{(n,m]}$
we set
\[
(\pi_{m,n}\zeta)(x_{n+1}, \ldots, x_m) \coloneqq  \int_{X_{\geq m+1}} \zeta(x_{n+1}, \ldots, x_m, \vf{x}) d\mu_{\geq m+1}(\vf{x}).
\]
Note that when $n=m$ this is interpreted as 
\[
\pi_{m,m}\zeta =  \int_{X_{\geq m+1}} \zeta(\vf{x}) d\mu_{\geq m+1}(\vf{x}) \in \C = L^p(\mu_{(m,m]}).
\]
Thus, we can define $P_{m,n} \coloneqq   \iota_{n,m} \circ \pi_{m,n} \in 
\Li(L^p(\mu_{\geq n+1}))$. Furthermore, if $\eta_1 \otimes \eta_2 \in L^p(\mu_{\leq n}) \otimes_p L^p(\mu_{\geq n+1})=L^p(\mu_{\geq 0})$, then a direct computation gives 
\begin{equation}\label{PpushTensor}
P_m(\eta_1 \otimes \eta_2) = \eta_1 \otimes P_{m,n}(\eta_2).
\end{equation}
We are now ready to show that $aP_m=P_ma$ when $a \in A_n$ and $n \leq m$.
Since the span of elementary tensors in $L^p(\mu_{\leq n}) \otimes_p L^p(\mu_{\geq n+1})$ is dense in $L^p(\mu_{\geq 0})$ (see Theorem \ref{SpatialTP}(\ref{denseTensor_p}) and the identification in Equation \eqref{tensorDecomp}), 
we only need to prove that 
\[
aP_m(\eta_1 \otimes \eta_2)=P_ma(\eta_1 \otimes \eta_2),
\]
when $\eta_1 \in L^p(\mu_{\leq n})$ and 
$\eta_2 \in L^p(\mu_{\geq n+1})$. Indeed, 
since $a \in A_n$, there is $b \in M_{n}$ such that $a=\rho_{\infty,n}(b)=\rho_{\leq n}(b) \otimes \op{Id}_{\geq n+1}$. Then, since $n \leq m$, two applications of 
Equation \eqref{PpushTensor}
give
\begin{align*}
aP_m(\eta_1 \otimes \eta_2) &= a(\eta_1 \otimes P_{m, n}(\eta_2)) \\
&= \rho_{\leq n}(b)\eta_1 \otimes P_{m, n}(\eta_2) \\
&= P_m(\rho_{\leq n}(b)\eta_1 \otimes \eta_2) \\
&= P_ma(\eta_1 \otimes \eta_2),
\end{align*}
finishing the proof. 
\end{proof}

For $p \in [1, \infty)$ we now proceed to define an unbounded operator $D$ on a dense suspace of $L^p(\mu_{\geq 0})$ such that 
 $(L^p(\mu_{\geq 0}), D)$ is an $L^p$-analogue of an unbounded Fredholm module over $A(d,\rho) \subseteq \Li(L^p(\mu_{\geq 0}))$. 
 We do so by mimicking part of the construction in Theorem 2.1 
 in \cite{CI} for general AF C*-algebras.  At this point, we need to add 
 extra structure on the representations $\rho=(\rho_j)_{j=0}^\infty$
 used to construct $L^p$ UHF-algebras of tensor type (see Equation \eqref{UHFLP}). The following assumptions will be used in the sequel:
\begin{enumerate}[label=(\subscript{U}{{\arabic*}})]
 \item \label{A1} The measure space $(X_0, \mathfrak{M}_0, \mu_0)$ is one dimensional, that is $L^p(\mu_0)$ is $\ell_1^p = \C$ and $d(0)=1$, 
 (we still ask $d(n) \geq 2$ for all $n \geq 1$). As a consequence, this forces the representation $\rho_0 \colon \C \to \Li(\ell_1^p)$ to be the trivial isometric one given by $\rho_0(z)\zeta = z\zeta$ for any $z, \zeta \in \C$.
 \item \label{A2} The given sequence of probability spaces $(X_j, \mathfrak{M}_j, \mu_j)_{j=0}^\infty$ consists of compact metric probability spaces. 
 That is, we assume that each $X_j$ is a compact metric space, with metric henceforth 
 denoted by $\dd_j \colon X_j \times X_j \to \R_{\geq 0}$, equipped with the Borel sigma 
 algebra and a probability measure $\mu_j$ on it. Note that thanks to \ref{A1}, $X_0=\{1\}$ already satisfies this assumption where $\dd_0=0$.
 \item \label{A3} Each $\dd_j$ is bounded by $1$, so that the infinite product measure spaces from Definition \ref{Prob_leq_>} \eqref{InfProd} are also compact metric probability space whose associated metric $\dd_{\geq n} \colon X_{\geq n} \times X_{\geq n} \to \R_{>0}$ is given by 
 \begin{equation}\label{ddMet}
  \dd_{\geq n}(\vf{x}, \vf{y}) = \dd_{ \geq n}((x_{n}, x_{n+1}, \ldots), (y_{n}, y_{n+1}, \ldots)) \coloneqq  \sum_{k=n}^\infty \frac{1}{2^k}\dd_k(x_{k}, y_k)
 \end{equation}
 \end{enumerate}
In Theorem \ref{UHFpST} below we will also need a fourth assumption:
 \begin{enumerate}[label=(\subscript{U}{{\arabic*}})] \setcounter{enumi}{3}
 \item \label{A4} The spaces $L^p(\mu_j)$ are all finite dimensional. That is, 
 we will assume that each $X_j$ consists 
 of finitely many points with $\mu_j$ being normalized counting measure. 
\end{enumerate}
\begin{lemma}\label{P_n-id_strongly}
Let $(X_j, \mathfrak{M}_j, \mu_j)_{j=0}^\infty$ be a sequence 
of compact-metric probability spaces satisfying assumptions \ref{A1}-\ref{A3} above. 
Let $p \in [1, \infty)$ and let $P_n=\iota_n \circ \pi_n \in \Li(L^p(\mu_{\geq 0}) )$ be as defined in part \eqref{Pprojns} 
of Definition \ref{iotaspisPs}. Then $P_n$ converges strongly to $\op{Id}_{\geq 0}$.
\end{lemma}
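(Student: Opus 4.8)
The plan is to exploit the uniform bound $\|P_n\| = 1$ from Proposition \ref{iotas_pis}(\ref{Pprojnsbdd}) together with the elementary fact that a uniformly bounded net of operators converges strongly as soon as it does so on a dense subspace. So the whole argument reduces to exhibiting a dense subspace of $L^p(\mu_{\geq 0})$ on which the $P_n$ stabilize.

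First I would isolate the right dense subspace. For each $m \in \Z_{\geq 0}$, Proposition \ref{iotas_pis}(\ref{pi_iot=1}) gives $\pi_m \circ \iota_m = \op{Id}_{\leq m}$, hence $P_m = \iota_m \circ \pi_m$ acts as the identity on $\iota_m\big(L^p(\mu_{\leq m})\big)$, which (via Fubini, using that the $\mu_j$ are probability measures) is precisely the subspace of functions in $L^p(\mu_{\geq 0})$ depending only on the coordinates $0,1,\dots,m$. Moreover, for $n \geq m$, part (\ref{Pprojnsbdd}) of that proposition gives $P_n P_m = P_m$, so $P_n$ too acts as the identity on $\iota_m\big(L^p(\mu_{\leq m})\big)$. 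Setting $\mathcal{D} \coloneqq \bigcup_{m \geq 0} \iota_m\big(L^p(\mu_{\leq m})\big)$, we conclude that for every $\zeta \in \mathcal{D}$ there is $N$ with $P_n \zeta = \zeta$ for all $n \geq N$; in particular $P_n \zeta \to \zeta$.

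Next I would check that $\mathcal{D}$ is dense in $L^p(\mu_{\geq 0})$. Since $\mathfrak{M}_{\geq 0}$ is the product $\sigma$-algebra, it is generated by the cylinder sets, so the simple functions supported on cylinder sets are dense in $L^p(\mu_{\geq 0})$ (alternatively, one may invoke \ref{A2}--\ref{A3}: then $X_{\geq 0}$ is a compact metric space, $C(X_{\geq 0})$ is dense in $L^p(\mu_{\geq 0})$, and Stone--Weierstrass applied to the point-separating subalgebra of continuous functions depending on finitely many coordinates does the job). Any such simple (or continuous) function depends on only finitely many coordinates, hence lies in some $\iota_m\big(L^p(\mu_{\leq m})\big) \subseteq \mathcal{D}$; thus $\mathcal{D}$ is dense. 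Finally, the standard $\varepsilon/2$ argument closes the proof: given $\eta \in L^p(\mu_{\geq 0})$ and $\varepsilon > 0$, pick $\zeta \in \mathcal{D}$ with $\|\eta - \zeta\|_p < \varepsilon/2$ and $N$ with $P_n \zeta = \zeta$ for $n \geq N$; then for $n \geq N$,
\[
\|P_n \eta - \eta\|_p \;\leq\; \|P_n\|\,\|\eta - \zeta\|_p + \|P_n \zeta - \zeta\|_p + \|\zeta - \eta\|_p \;<\; \tfrac{\varepsilon}{2} + 0 + \tfrac{\varepsilon}{2} \;=\; \varepsilon,
\]
using $\|P_n\| = 1$. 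Hence $P_n \to \op{Id}_{\geq 0}$ strongly. The only step that requires any care is the density of $\mathcal{D}$ (i.e.\ that functions of finitely many coordinates are $L^p$-dense in the infinite product); everything else is immediate from Proposition \ref{iotas_pis}.
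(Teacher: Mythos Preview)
Your proof is correct and takes a genuinely different route from the paper's. The paper works directly with $C(X_{\geq 0})$ as its dense subspace: for a continuous $\eta$ it uses uniform continuity together with the explicit product metric $\dd_{\geq 0}$ from \ref{A3} (via a finite $\delta/3$-cover of $X_{\geq 0}$) to show that for large $n$ the quantity $|(P_n\eta)(\vf{x})-\eta(\vf{x})|$ is uniformly small, then integrates. You instead choose the subspace $\mathcal{D}=\bigcup_m \iota_m(L^p(\mu_{\leq m}))$, on which the $P_n$ eventually act as the identity, so convergence there is trivial; the only work is the density of $\mathcal{D}$, which you dispatch either by the cylinder-set argument or by Stone--Weierstrass. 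Interestingly, the paper \emph{deduces} the density of $\mathcal{D}$ as a corollary of this lemma (see the paragraph immediately following it), whereas you establish it independently and use it to prove the lemma---the logical direction is reversed. Your approach is a bit more elementary (no compactness/uniform-continuity argument) and, via the cylinder-set route, shows that the metric hypotheses \ref{A2}--\ref{A3} are not actually needed for this particular lemma; the paper's route makes essential use of them but in exchange gets density of $C(X_{\geq 0})$ for free.
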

\begin{proof}
Note that $X_{\geq 0}$ is compact and that $C(X_{\geq 0})$ is a dense subset of $L^p(\mu_{\geq 0})$. Thus,  
it suffices to show that $\| P_n \eta - \eta \|_p \to 0$ as $n \to \infty$ 
for any $\eta \in C(X_{ \geq 0})$. Let $\varepsilon >0$, then by uniform continuity there is $\delta >0$
such that for all $\vf{x}, \vf{y} \in X_{ \geq 0}$, $\dd_{ \geq 0}(\vf{x}, \vf{y}) < \delta$ 
implies that $|\eta(\vf{x})-\eta(\vf{y})|<\varepsilon$.  
Take a finite cover $(B_{\delta/3}(\vf{x}_j))_{j=1}^k$ of $X_{ \geq 0}$. 
For every $n \in \Z_{\geq 0}$, let $p_n \colon X_{ \geq 0} \to X_{\geq n+1}$ be defined by
\[
p_n(\vf{x}) \coloneqq  p_n(x_0, x_1, \ldots, x_n, x_{n+1}, \ldots)=(x_{n+1}, x_{n+2}, \ldots).
\]
 For 
every $j,l \in \{1, \ldots, k\}$, we use convergence of the series in \eqref{ddMet} to find an integer $N_{j,l}>0$ such that whenever 
$n\geq N_{j,l}$
\[
\dd_{\geq n+1}(p_n(\vf{x}_j), p_n(\vf{x}_l)) < \frac{\delta}{3}.
\]
Set $N= \max\{N_{j,l} \colon 1 \leq j,l \leq  k\}$, let $n \geq N$, 
and take $\vf{x}, \vf{y} \in X_{\geq 0}$ such that $x_0=y_0$, $x_1=y_1$, $\ldots$, $x_n=y_n$. This implies that $\dd_{\geq 0}(\vf{x}, \vf{y})=\dd_{\geq n+1}(p_n(\vf{x}), p_n(\vf{y}))$. Further, there are $j,l \in \{1, \ldots, k\}$
such that $\vf{x} \in B_{\delta/3}(\vf{x}_j)$ and $\vf{y} \in B_{\delta/3}(\vf{y}_l)$. 
Therefore,
\begin{align*}
\dd_{\geq 0}(\vf{x}, \vf{y}) & = \dd_{\geq n+1}(p_n(\vf{x}), p_n(\vf{y})) \\
& \leq  \dd_{\geq n+1}(p_n(\vf{x}), p_n(\vf{x}_j)) + \dd_{\geq n+1}(p_n(\vf{x}_j), p_n(\vf{x}_l)) + \dd_{\geq n+1}(p_n(\vf{x}_l), p_n(\vf{y})) \\
& \leq \dd_{\geq 0}(\vf{x}, \vf{x}_j) + \dd_{\geq n+1}(p_n(\vf{x}_j), p_n(\vf{x}_l)) + \dd_{\geq 0}(\vf{x}_l, \vf{y}) \\
& < \delta.
\end{align*}
Hence, for each $n \geq N$, if $\vf{z}, \vf{w} \in X_{\geq n+1}$, we have 
\[
|\eta(x_0, \ldots, x_n, \vf{z})-\eta(x_0, \ldots, x_n, \vf{w})|<\varepsilon.
\]
This in turn implies that for any $\vf{x}$, if $n \geq N$, 
\[
|(P_n\eta)(\vf{x})-\eta(\vf{x})| \leq \int_{X_{\geq n+1}}
|\eta(x_1, \ldots, x_n, \vf{z})-\eta(x_1, \ldots, x_n, p_n(\vf{x}))| d\mu_{\geq n+1}(\vf{z}) < \varepsilon,
\]
from where it follows that $\| P_n\eta - \eta\|_p \leq \varepsilon$ for each $n \geq N$. 
This finishes the proof. 
\end{proof}
An immediate consequence of the previous lemma is that any element $\eta \in L^p(\mu_{\geq 0})$ is approximated by the sequence $(P_n\eta)_{n=0}^\infty$ in $L^p(\mu_{\geq 0})$. For each $n \in \Z_{\geq 0}$ we have $P_n\eta\in \iota_n(L^p(\mu_{\leq n}))$, whence it follows that the nested union 
\begin{equation}\label{NestedD}
\mathcal{D} \coloneqq  \bigcup_{n=0}^\infty \iota_n(L^p(\mu_{\leq n}))
\end{equation}
is a dense subspace of $L^p(\mu_{\geq 0})$.
\begin{definition}\label{Qmaps}
Fix $p \in [1, \infty)$, let $(X_j, \mathfrak{M}_j, \mu_j)_{j=0}^\infty$
be a sequence of probability spaces, and let $(X_{\leq n}, \mathfrak{M}_{\leq n}, \mu_{\leq n}) $ and 
$(X_{\geq n}, \mathfrak{M}_{\geq n}, \mu_{\geq n})$ be as in Definition \ref{Prob_leq_>}.  For each $n \in \Z_{\geq 1}$ we define 
\[
Q_n \coloneqq  P_n-P_{n-1} \in \Li(L^p(\mu_{\geq 0})).
\]
Additionally we define $Q_0 \coloneqq  P_0$. 
\end{definition}
\begin{lemma}\label{QinK}
For every $n \in \Z_{\geq 0}$, $P_n, Q_n \in \mathcal{K}(L^p(\mu_{\geq 0}))$, 
provided that we assume condition \ref{A4} above. 
 \end{lemma}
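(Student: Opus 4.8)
The plan is to observe that assumption \ref{A4} collapses everything in sight to finite dimensions, so that each $P_n$ is literally a finite-rank operator and compactness becomes automatic; the only real content is bookkeeping about which spaces are finite-dimensional.

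First I would record the consequence of \ref{A4}: since each $X_j$ consists of finitely many points, $L^p(\mu_j)$ is finite-dimensional (it is $\ell^p$ on a finite set). Hence, using the identification $L^p(\mu_{\leq n}) = L^p(\mu_0) \otimes_p \cdots \otimes_p L^p(\mu_n)$ from Definition \ref{Prob_leq_>}, or equivalently that $X_{\leq n}$ is a finite product of finite sets, the Banach space $L^p(\mu_{\leq n})$ is finite-dimensional for every $n \in \Z_{\geq 0}$. (Note it is exactly here that \ref{A4} is needed and \ref{A1}--\ref{A3} do not suffice, since without \ref{A4} the space $L^p(\mu_{\leq n})$ is merely separable.)

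Next I would deduce that $P_n$ has finite rank. By Proposition \ref{iotas_pis}, $P_n = \iota_n \circ \pi_n$ with $\iota_n \in \Li(L^p(\mu_{\leq n}), L^p(\mu_{\geq 0}))$ and $\pi_n \in \Li(L^p(\mu_{\geq 0}), L^p(\mu_{\leq n}))$, so the range of $P_n$ is contained in $\iota_n(L^p(\mu_{\leq n}))$, which is the image of a finite-dimensional space under a bounded linear map and therefore finite-dimensional. Thus $P_n$ is a finite-rank operator, and in particular $P_n \in \mathcal{K}(L^p(\mu_{\geq 0}))$.

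Finally, for $n \in \Z_{\geq 1}$ the operator $Q_n = P_n - P_{n-1}$ is a difference of two finite-rank (hence compact) operators, so $Q_n \in \mathcal{K}(L^p(\mu_{\geq 0}))$ as well, while $Q_0 = P_0$ is compact by the previous paragraph. This establishes the lemma. There is no genuine obstacle here; the only point requiring care is to make sure one invokes \ref{A4} (so that $L^p(\mu_{\leq n})$ is finite-dimensional) rather than just the metric structure of \ref{A1}--\ref{A3}, and to cite that finite-rank operators are compact and that $\mathcal{K}(L^p(\mu_{\geq 0}))$ is closed under differences.
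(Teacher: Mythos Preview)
Your proposal is correct and follows essentially the same approach as the paper: identify the range of $P_n$ as $\iota_n(L^p(\mu_{\leq n}))$, use \ref{A4} to see this is finite-dimensional so that $P_n$ has finite rank, and conclude $Q_n = P_n - P_{n-1}$ is compact as a difference of finite-rank operators.
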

 \begin{proof}
Part \eqref{Pprojnsbdd} in Proposition \ref{iotas_pis} gives at once that, 
for any $n \in \Z_{\geq 0}$, 
\[
P_n(L^p(\mu_{\geq 0})) = \iota_n(L^p(\mu_{\leq n})) \cong \bigotimes_{j=0}^n L^p(\mu_j).
\]
Hence, assumption \ref{A4} implies that $P_n$ has finite rank, and therefore 
for each $n \in \Z_{\geq 0}$, we must have $Q_n= P_n-P_{n-1}\in \mathcal{K}(L^p(\mu_{\geq 0}))$, as wanted. 
 \end{proof}
 Fix a sequence $\alpha=(\alpha_n)_{n=1}^\infty$
in $\R_{>0}$ with $\alpha_n \to \infty$ and set $\alpha_0=0$. 
Let $\mathcal{D}$ be as in Equation \eqref{NestedD} and define $D_\alpha  \colon \mathcal{D} \to L^p(\mu_{\geq 0})$ by 
\begin{equation}\label{UHFdirac}
D_\alpha \coloneqq  \sum_{n=1}^\infty \alpha_n Q_n.
\end{equation}
Observe first that $D_\alpha$ is well defined. Indeed, if $\eta \in \mathcal{D}$, there is $k \in \Z_{\geq 1}$ such that $\eta \in \iota_k(L^p(\mu_{\leq k}))$. Next, notice that $P_m\eta=P_{m-1}\eta=\eta$ for all $m > k$. Therefore 
\[
D_\alpha\eta = \sum_{n=1}^k \alpha_n Q_n\eta \in L^p(\mu_{\geq 0}).
\]
We have now arrived at an analogue of Theorem \ref{GdisLpT} for $L^p$ UHF-algebras 
of tensor type.
\begin{theorem}\label{UHFpST}
Let $p \in [1, \infty)$, let $A \coloneqq A(d,\rho) \subseteq \Li(L^p(\mu_{\geq 0}))$ be an $L^p$ UHF-algebra as defined in 
\eqref{UHFLP} so that $\rho=(\rho_j)_{j=0}^\infty$ satisfies the assumptions \ref{A1}-\ref{A4} above. Let $D=D_\alpha$ be as in \eqref{UHFdirac}. Then, $(A, L^p(\mu_{\geq 0}), D)$ is an $L^p$-spectral triple 
as in Definition \ref{LpST}. That is, 
\begin{enumerate}
\item \label{UHFpST1} $(\op{Id}_{ \geq 0}+D^2)^{-1} \in \mathcal{K}(L^p(\mu_{\geq 0}))$, 
\item \label{UHFpST2} $(D-\lambda \op{Id}_{ \geq 0})^{-1} \in \mathcal{K}(L^p(\mu_{\geq 0}))$ for all $\lambda \in \C \setminus \sigma(D)$. 
\item \label{UHFpST3}$\{ a \in A \colon \| [D,a] \| < \infty\}$ is 
a norm dense subalgebra of $A$.
\end{enumerate}
\end{theorem}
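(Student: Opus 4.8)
The plan is to reduce everything to the block structure of $D_\alpha$. From Proposition~\ref{iotas_pis} the $P_n$ are commuting contractive idempotents with $P_nP_m=P_{\min(m,n)}$, so the operators $Q_n$ of Definition~\ref{Qmaps} satisfy $Q_nQ_m=\delta_{nm}Q_n$ and $P_N=\sum_{n=0}^{N}Q_n$; each $Q_n$ has finite rank by Lemma~\ref{QinK} (this is where assumption~\ref{A4} is used), and $P_N\to\op{Id}_{\geq 0}$ strongly by Lemma~\ref{P_n-id_strongly}. On the dense domain $\mathcal{D}$ of \eqref{NestedD}, a direct computation using $Q_nQ_m=\delta_{nm}Q_n$ shows $D_\alpha^2=\sum_n\alpha_n^2Q_n$ on $\mathcal{D}$ and that $\op{Id}_{\geq 0}+D_\alpha^2$ is a bijection of $\mathcal{D}$ onto itself whose inverse sends $\eta$ to $\sum_n(1+\alpha_n^2)^{-1}Q_n\eta$ (a finite sum for each $\eta\in\mathcal{D}$); similarly, for $\lambda\notin\{\alpha_n:n\geq 0\}$ the operator $D_\alpha-\lambda$ is a bijection of $\mathcal{D}$ onto itself with inverse $\eta\mapsto\sum_n(\alpha_n-\lambda)^{-1}Q_n\eta$. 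Thus parts \eqref{UHFpST1} and \eqref{UHFpST2} both follow once we prove: \emph{if $(f_n)_{n\geq 0}$ is a scalar sequence with $f_n\to 0$, then $\sum_n f_nQ_n$ converges in operator norm to a bounded operator on $L^p(\mu_{\geq 0})$, and this operator lies in $\mathcal{K}(L^p(\mu_{\geq 0}))$.}

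To prove the claim I would use Abel summation together with the contractivity $\|P_n\|\leq 1$ from Proposition~\ref{iotas_pis}. Resumming $\sum_{n=N+1}^{k}f_n(P_n-P_{n-1})$ gives
\[
\sum_{n=N+1}^{k}f_nQ_n=f_kP_k-f_{N+1}P_N+\sum_{n=N+1}^{k-1}(f_n-f_{n+1})P_n,
\]
hence $\bigl\|\sum_{n=N+1}^{k}f_nQ_n\bigr\|\leq 2\sup_{n>N}|f_n|+\sum_{n>N}|f_n-f_{n+1}|$. The right-hand side tends to $0$ as $N\to\infty$ once $f_n\to 0$ and $(f_n)$ is of bounded variation, so the partial sums $S_N:=\sum_{n\leq N}f_nQ_n$ are Cauchy in operator norm and $\sum_n f_nQ_n$ converges to a bounded operator; since each $S_N$ is finite rank (Lemma~\ref{QinK}), the limit is compact. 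Applying this with $f_n=(1+\alpha_n^2)^{-1}$, which tends to $0$ and is of bounded variation as soon as $(\alpha_n)$ is monotone (the natural choice), yields \eqref{UHFpST1}; for \eqref{UHFpST2} one first notes that $\sigma(D)=\{\alpha_n:n\geq 0\}$ — the inclusion $\supseteq$ because $D-\alpha_m$ vanishes on $\op{ran}(Q_m)\neq\{0\}$ (nonzero by \ref{A1}), and $\subseteq$ because for $\lambda\notin\{\alpha_n\}$ one has $\inf_n|\alpha_n-\lambda|>0$ (as $\alpha_n\to\infty$), so that $(\alpha_n-\lambda)^{-1}$ is a bounded-variation null sequence to which the claim applies. (The operators $\op{Id}_{\geq 0}+D^2$ and $D-\lambda$ are not literally onto $L^p(\mu_{\geq 0})$, their range being the dense set $\mathcal{D}$; this is harmless since $D$ is closable and one reads the resolvents as the bounded extensions of their restrictions to $\mathcal{D}$, equivalently as genuine resolvents of $\overline{D}$.)

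For \eqref{UHFpST3} the plan is to show that the dense subalgebra $\bigcup_n A_n$ of $A$ (see \eqref{UHFLP}) is contained in the set $\{a\in A:\|[D,a]\|<\infty\}$; since the latter is a subalgebra by the Leibniz rule, density follows. Fix $a\in A_n$. One checks that $a$ preserves $\mathcal{D}$ (indeed $a\,\iota_k(L^p(\mu_{\leq k}))\subseteq\iota_{\max(k,n)}(L^p(\mu_{\leq\max(k,n)}))$), so $[D,a]$ is a well-defined linear map on $\mathcal{D}$. Lemma~\ref{aP=Pa} gives $aP_m=P_ma$ for every $m\geq n$, hence $aQ_m=Q_ma$ for every $m>n$; therefore on $\mathcal{D}$
\[
[D,a]=\sum_{m\geq 1}\alpha_m[Q_m,a]=\sum_{m=1}^{n}\alpha_m[Q_m,a],
\]
a finite sum of bounded operators, which consequently extends to a bounded operator on $L^p(\mu_{\geq 0})$; thus $\|[D,a]\|<\infty$.

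The step I expect to be the main obstacle is the boundedness and compactness of the resolvents in the second paragraph: on a Hilbert space the orthogonality of the $Q_n$ makes $\|\sum_n f_nQ_n\|=\sup_n|f_n|$ immediate, whereas on $L^p$ one has no such identity and must rely on the Abel-summation estimate, whose usefulness hinges precisely on the contractivity of the $P_n$ from Proposition~\ref{iotas_pis} (and, in the displayed estimate, on a bounded-variation hypothesis on the relevant coefficient sequence, which is automatic for monotone $(\alpha_n)$). By contrast, \eqref{UHFpST3} is comparatively soft, using only the block structure and the commutation Lemma~\ref{aP=Pa}.
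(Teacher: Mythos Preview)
Your argument follows the same route as the paper's: the orthogonality relations $Q_nQ_m=\delta_{nm}Q_n$, the explicit inverse $\sum_n(1+\alpha_n^2)^{-1}Q_n$, compactness via the finite rank of the $Q_n$ (Lemma~\ref{QinK}), and Lemma~\ref{aP=Pa} to kill all but finitely many commutators for part~\eqref{UHFpST3}.

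The one substantive difference is in justifying operator-norm convergence of $\sum_n f_nQ_n$. The paper simply asserts that the partial sums $K_m=\sum_{n\le m}(1+\alpha_n^2)^{-1}Q_n$ are Cauchy in $\mathcal{K}(L^p(\mu_{\geq 0}))$, citing only $\alpha_n\to\infty$ and finite rank of $Q_n$; you instead supply an Abel summation estimate using $\|P_n\|\le 1$, which makes this rigorous under a bounded-variation hypothesis on the coefficients (automatic for monotone $(\alpha_n)$, as you note). Your care here is warranted: absent Hilbert-space orthogonality one cannot read off $\|\sum f_nQ_n\|$ from $\sup|f_n|$, and the naive triangle inequality $\|K_m-K_{m'}\|\le 2\sum_{n>m'}(1+\alpha_n^2)^{-1}$ need not tend to zero for arbitrary $\alpha_n\to\infty$. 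So your Abel-summation step genuinely fills a detail the paper leaves implicit, at the price of the mild extra monotonicity assumption. Your remark on reading the resolvents as those of the closure $\overline{D}$ is likewise a clarification the paper handles only by invoking density of~$\mathcal{D}$.
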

\begin{proof}
Notice that for any $m>n$, we use part \eqref{Pprojnsbdd} in 
Proposition \ref{iotas_pis} to compute 
\[
Q_nQ_m
= P_nP_m-P_nP_{m-1}-P_{n-1}P_m+P_{n-1}P_{m-1}
= P_n-P_n-P_{n-1}+P_{n-1}=0, 
\]
and similarly $Q_mQ_n=0$. Furthermore, for each $n \in \Z_{\geq 0}$, 
\[
Q_n^2=P_n^2-P_nP_{n-1}-P_{n-1}P_n+P_{n-1}^2=P_n-2P_{n-1}+P_{n-1}=Q_n.
\]
Furthermore observe that on the domain $\mathcal{D}$ (see Equation \eqref{NestedD}) we get 
\[
\op{Id}_{ \geq 0}= P_0 + \sum_{n=1}^\infty Q_n.
\]
Hence, on $\mathcal{D}$, 
\begin{equation}\label{I+D^2onD}
\op{Id}_{ \geq 0}+D^2  =\op{Id}_{ \geq 0} + \sum_{n=1}^\infty \alpha_n^2 Q_n= P_0 + \sum_{n=1}^\infty (1+\alpha_n^2) Q_n.
\end{equation}
We now define, for each $m \in \Z_{\geq 0}$, a
map $K_m \in \Li(L^p(\mu_{\geq 0}))$ by
\[
K_m \coloneqq  \sum_{n=1}^m \frac{1}{1+\alpha_n^2} Q_n.
\]
Notice that, since $\alpha_n\to \infty$ and since for each $n \in \Z_{\geq 0}$, $Q_n$ has finite rank (see Lemma \ref{QinK}), then $(K_m)_{m=1}^\infty$ is a Cauchy sequence in $\mathcal{K}(L^p(\mu_{>0}))$, 
so it converges to an element $K \in \mathcal{K}(L^p(\mu_{>0}))$. 
A direct computation, using Equation \eqref{I+D^2onD} together with the fact that $P_0Q_n=0$ for all $n \in \Z_{\geq 0}$ (see Part \eqref{Picontr} in Proposition \ref{iotas_pis}), gives  
\[
\op{Id}_{\mathcal{D}}=K(\op{Id}_{ \geq 0}+D^2) = (\op{Id}_{ \geq 0}+D^2)K,
\]
which proves that $\op{Id}_{ \geq 0}+D^2$ is invertible 
on its range, which is in turn contained in $\mathcal{D}$ as per Equation \eqref{I+D^2onD}. Therefore, using the density of $\mathcal{D}$ in $L^p(\mu_{\geq 0})$, we get that $(\op{Id}_{ \geq 0}+D^2)^{-1} = K \in \mathcal{K}(L^p(\mu_{>0}))$, proving Part \eqref{UHFpST1}. Part \eqref{UHFpST2} 
is proved analogously by observing that $\sigma(D)=\{\alpha_n \colon n \in \Z_{\geq 0}\}$.  
Finally, for Part \eqref{UHFpST3} since by definition of $A$ (see Equation \eqref{UHFLP}) the union of the algebras $A_n$ is dense in $A$, 
it suffices to show that $[D,a]$ is bounded for any $a \in A_n$. 
To do so, we fix $n \in \Z_{\geq 0}$ and take any $a \in A_n$. It follows from Lemma
\ref{aP=Pa} that, for any $m>n$, we have $aP_m=P_ma$ and that $aP_{m-1}=P_{m-1}a$. 
Then, 
\[
[Q_m, a] = Q_ma-aQ_m = (P_m-P_{m-1})a - a(P_m-P_{m-1})=0.
\]
Therefore, 
\[
[D,a] = \sum_{k=0}^n \alpha_k [Q_k, a] 
\]
and since each $ [Q_k, a] \in \Li(L^p(\mu_{\geq 0}))$ it follows that $\|[D,a]\| < \infty$, as wanted. 
\end{proof}

\subsection{$p$-Quantum Compact Metric for UHF $L^p$-Spectral Triples.}\label{QMUHF}

Theorem 2.1 in \cite{CI} is an important result, where it is shown that the Dirac operator construction on a general AF-algebra C*-algebra $A$ gives rise to a quantum compact metric on $S(A)$, the state space of $A$, which induces the weak-$*$ topology on it. Below we prove that a 
$p$-analogue of this result is also true. We do mention that, due to a lack of a GNS representation for $L^p$-operator algebras, we cannot carry Christensen-Ivan's construction word by word. However, thanks to the work presented in Subsection \ref{DOUHF_ST} above, a slight modification of the argument used in the proof of \cite[Theorem 2.1]{CI} works in the $L^p$-setting. 
Our technique will be to use  M.\ A.\ Rieffel's general scenario described at the end of Section \ref{secDefiLp} (Theorem \ref{RSuffT}) applied to the $L^p$ setting, which will guarantee that the  quantum compact metric construction can also be carried out. 

We are now ready to state and prove our version of Theorem 2.1 in \cite{CI}: 

\begin{theorem}\label{FinThm}
Let $p \in [1, \infty)$ and let $(A(d, \rho), L^p(\mu_{\geq 0}), D_\alpha)$ be the associated $L^p$-spectral triple from Theorem \ref{UHFpST}.  Then, there exists a sequence of real numbers $\alpha=(\alpha_n)_{n=0}^\infty$ such that $\alpha_n \to \infty$ and such that the extended pseudometric $\op{mk}_{D_\alpha}$ (defined by Equation \eqref{ExtPM}) is an actual metric that induces the weak-$*$ topology on $S(A)$.
\end{theorem}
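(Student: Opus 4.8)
I would realize $\op{mk}_{D_\alpha}$ as the Rieffel metric $\op{d}_S$ of Theorem~\ref{RSuffT} and then arrange, by a good choice of $\alpha$, that the total–boundedness hypothesis of that theorem holds. For Rieffel's data take $B=A=A(d,\rho)$ with its $L^p$-operator norm, $\mathcal{L}=\{a\in A:\|[D_\alpha,a]\|<\infty\}$, and the seminorm $L(a)=\|[D_\alpha,a]\|$ (a seminorm since $[D_\alpha,\cdot]$ is a derivation). By Theorem~\ref{UHFpST}\eqref{UHFpST3}, $\bigcup_nA_n\subseteq\mathcal{L}$, so $\mathcal{L}$ is norm dense in $A$; one then checks that $\mathcal{N}=\{a\in\mathcal{L}:L(a)=0\}$ is exactly $\C1_A$ (this is where assumption~\ref{A1}, which gives $A_0=\C1_A$, is used), so that with $\varphi(c1_A)=c$ Rieffel's set $S$ becomes the full state space $S(A)$, while density of $\mathcal{L}$ and weak-$*$ continuity of states show that $\mathcal{L}$ separates the points of $S(A)$. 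Granting \ref{R1}--\ref{R5}, the discussion in Section~\ref{secDefiLp} identifies $\op{d}_S$ with $\op{mk}_{D_\alpha}$ and shows it is an extended metric separating points; by Theorem~\ref{RSuffT} it will be a genuine metric inducing the weak-$*$ topology on $S(A)$ as soon as the image of $\mathcal{L}_1=\{a\in\mathcal{L}:L(a)\le1\}$ in $A/\mathcal{N}$ is totally bounded for $\|{-}\|_{A/\mathcal{N}}$.

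\textbf{The quantitative core.} To produce total boundedness I would imitate the proof of \cite[Theorem 2.1]{CI}, replacing the trace-preserving conditional expectations (unavailable here for want of a GNS construction) by contractive linear maps $\Phi_n\colon A\to A_n$ built from the structural maps of Subsection~\ref{DOUHF}: using the algebraic factorization $M_m\cong M_n\otimes\bigotimes_{j=n+1}^m M_{d(j)}(\C)$, the inclusion/projection maps $\iota,\pi$, and the normalized-trace state on the tail factor, define $\Phi_n$ on the dense subalgebra $\bigcup_mA_m$, check it is contractive (this rests on the isometric multiplicativity of $\otimes_p$, Theorem~\ref{SpatialTP}) so that it extends to $A$, that it restricts to the identity on $A_n$, that it obeys the tower law $\Phi_n\Phi_m=\Phi_{\min(n,m)}$, and that $\Phi_n(a)\to a$ in norm for every $a\in A$ (a consequence of Lemma~\ref{P_n-id_strongly}, since $\Phi_n$ is assembled from the $P_n$). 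The heart is then the estimate
\[
\|\Phi_n(a)-\Phi_{n-1}(a)\|\;\le\;\frac{C}{\alpha_n-\alpha_{n-1}}\,\|[D_\alpha,a]\|\qquad(a\in\mathcal{L},\ n\ge1),
\]
with $C$ depending only on $p$; as in \cite{CI} this should follow from the fact that $y:=\Phi_n(a)-\Phi_{n-1}(a)$ lies in $A_n$ with $\Phi_{n-1}(y)=0$, on which $[D_\alpha,\cdot]$ is bounded below by a multiple of $(\alpha_n-\alpha_{n-1})\|y\|$, together with the fact that the $\Phi_k$'s perturb $[D_\alpha,a]$ in a controlled way. Telescoping gives $\|a-\Phi_n(a)\|\le\big(\sum_{k>n}C/(\alpha_k-\alpha_{k-1})\big)\|[D_\alpha,a]\|$ for all $a\in\mathcal{L}$.

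\textbf{Choosing $\alpha$ and finishing; the main obstacle.} I would then fix $\alpha_0=0$ and choose $\alpha_n\to\infty$ with $\sum_n 1/(\alpha_n-\alpha_{n-1})<\infty$, for instance $\alpha_n=2^n$, so the tails $\varepsilon_n:=\sum_{k>n}C/(\alpha_k-\alpha_{k-1})$ tend to $0$. For $a\in\mathcal{L}_1$ this gives $\|a-\Phi_n(a)\|\le\varepsilon_n$; the case $n=0$ (using $\Phi_0(A)\subseteq A_0=\mathcal{N}$) shows the image of $\mathcal{L}_1$ in $A/\mathcal{N}$ is $\varepsilon_0$-bounded, and for general $n$ this image lies within $\varepsilon_n$ of the image of the bounded subset $\{\Phi_n(a):a\in\mathcal{L}_1\}$ of the finite-dimensional space $A_n$; covering the latter by finitely many $\varepsilon_n$-balls and letting $n\to\infty$ yields the required total boundedness, and Theorem~\ref{RSuffT} finishes the proof. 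The step I expect to be the main obstacle is precisely the construction of the $\Phi_n$ together with the displayed lower bound for $\|[D_\alpha,\cdot]\|$ on the degree-$n$ pieces: in \cite{CI} both hinge on the Hilbert-space conditional expectation and its interaction with $D$ through the GNS space, so here one must run the argument intrinsically in terms of the spatial tensor decompositions and the idempotents $P_n$, taking special care that the relevant slice maps are contractive for the $p$-operator norms. A closely related structural point that must be settled for the argument to even get started is the identification $\mathcal{N}=\C1_A$ (equivalently, finiteness of $\op{mk}_{D_\alpha}$ on $S(A)$), since the projections $P_n=\iota_n\pi_n$ a priori have a large commutant inside $A$.
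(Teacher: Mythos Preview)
Your overall strategy---casting $\op{mk}_{D_\alpha}$ as Rieffel's $\op{d}_S$ and verifying the total-boundedness hypothesis of Theorem~\ref{RSuffT}---is exactly the paper's, and your treatment of \ref{R1}--\ref{R5} (in particular $\mathcal{N}=\C 1_A$ via \ref{A1}, $S=S(A)$, and separation of points by density of $\mathcal L$) matches. The divergence is in the quantitative core. The paper does \emph{not} build conditional-expectation-type maps $\Phi_n\colon A\to A_n$. Instead it observes that, since $(X_{\geq 0},\mu_{\geq 0})$ is a probability space, the constant function $1$ lies in $L^p(\mu_{\geq 0})$ and serves as a replacement for the GNS cyclic/separating vector: one has $Q_0 1=1$ and $D1=0$, so for $a\in\mathcal L_1$,
\[
1\ \ge\ \|[D,a]\|\ \ge\ \|Q_{n+1}[D,a]Q_0\, 1\|_p\ =\ |\alpha_{n+1}|\,\|Q_{n+1}a\,1\|_p.
\]
Assumption~\ref{A4} makes the range of $Q_{n+1}$ finite dimensional, and a norm-equivalence argument on that finite-dimensional piece produces constants $c_{n+1}\ge 1$ with $\|Q_{n+1}a\|\le c_{n+1}/|\alpha_{n+1}|$; the sequence $\alpha$ is then chosen \emph{after} the $c_n$'s, exactly as in \cite{CI}. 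This sidesteps the construction and contractivity of your $\Phi_n$ entirely---an economy worth noting, since contractivity of trace-based slice maps for the $p$-operator norms coming from the possibly non-standard representations $\rho_j$ is precisely the obstacle you flagged.

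There is also a gap in your proposal as written: the displayed inequality cannot hold with a constant $C$ depending only on $p$. Already in the $C^*$-setting of \cite{CI} the comparison between $\|y\|$ and $\|[D,y]\|$ on the degree-$n$ piece carries an $n$-dependent constant, arising from equivalence of norms on a finite-dimensional space whose dimension grows with $n$; here the same phenomenon appears as the $c_{n+1}$ above. Consequently a fixed choice like $\alpha_n=2^n$ fails in general---one must choose $\alpha_n$ adaptively (e.g.\ so that $\sum_n c_n/\alpha_n<\infty$). With that correction your telescoping scheme could in principle be completed, but the paper's route through the vector $1$ is shorter and avoids the $L^p$-contractivity issues for slice maps that you would otherwise have to resolve.
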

\begin{proof}
We will show that all the hypotheses of  M.\ A.\ Rieffel's theorem (Theorem \ref{RSuffT})
are met. 
To do so, we start by checking that all the assumptions \ref{R1}-\ref{R5} are satiesfied. Indeed, the normed space $B$ is now $A(d,\rho) \subseteq \Li(L^p(\mu_{\geq 0}))$, any $L^p$ UHF-algebra defined in \eqref{UHFLP} that also satisfies assumptions \ref{A1}-\ref{A3} above. 
For now, we fix a sequence $\alpha=(\alpha_{n})_{n=1}^\infty$ in $\R_{>0}$ 
and set $\alpha_0=0$. Recall that $D=D_\alpha$ is as in Equation \eqref{UHFdirac}
above. 
Set
\[
\mathcal{L} \coloneqq  \{ a \in A \colon \| [D, a] \| < \infty \}
\]
as our subspace and let $L\colon \mathcal{L} \to \R_{\geq 0}$ be given by 
$L(a) =  \| [D, a] \|$.
Then, by construction of $D$, we have 
\[
\mathcal{N} = \{a \in  \mathcal{L} \colon L(a) = 0\} = \C \cdot 1_{A_0} = \C \cdot 1_{A}  \cong \C.
\]
Next we define $\varphi \in A'$ by letting 
$\varphi(a)=1$ for all $a \in \C \cdot 1_{A}$ and zero elsewhere. 
Thus, 
\[
 S \coloneqq \{\omega \in  A'  \colon \omega = \varphi \text{ on $\mathcal{N}$, and }\|\omega\|= 1\}
=  \{\omega \in  A'  \colon \|\omega\|= \omega(1_A)=1\}.
\] 
This gives that $S=S(A)$ is the standard definition of the state space of a unital Banach algebra $A$ as in Definition \ref{States}. In order for assumption \ref{R5} to be satisfied, 
we now need to verify that $\mathcal{L}$ does indeed separate the points of $S(A)$. 
Indeed, take any $\omega, \psi \in S(A)$ with $\omega \neq \psi$, but suppose 
for a contradiction that $\omega(b)=\psi(b)$ for all $b \in \mathcal{L}$.  Now since Theorem \ref{UHFpST} guarantees that $\mathcal{L}$ is dense in $A$, so we must have that $\omega$ and $\psi$ actually agree on all of $A$, 
contradicting that $\omega \neq \psi$. 

Note that $\op{d}_{S(A)}$ and $\op{mk}_D$ are now the same function. Thus, it only remains for us to prove that the image of $\mathcal{L}_1 = \{ a \in \mathcal{L} \colon L(a)\leq 1\}$ in $\mathcal{L}/\C$ is totally bounded for the quotient norm $\|-\|_{A/\C}$. To do so, 
it suffices to show that a key estimate from the proof of \cite[Theorem 2.1]{CI}
remains valid in this situation. Since we don't have a GNS representation here, 
we also don't have the separating vector used in \cite{CI}. 
However, since $(X_{\geq0}, \mu_{\geq0})$ 
is a probability space, we do have  that the constant function $1$ is  in $L^p(\mu_{\geq 0})$. 
Hence, for any $a \in \mathcal{L}_1$ we get 
\[
1 \geq \| [D,a] \| \geq \| Q_{n+1}[D, a] Q_01\|_p = |\alpha_{n+1}|\|Q_{n+1}a1\|_p.
\] 
By dimension arguments (see Lemma \ref{QinK}) we now have that
the seminorms $a \mapsto \|Q_{n+1}a1\|_p$ and $ a \mapsto \|Q_{n+1}a\|$
are equivalent so we are able to find $c_{n+1} \in R_{\geq 1}$
such that 
\[
\| Q_{n+1}(a)\| \leq \frac{c_{n+1}}{|\alpha_{n+1}|}.
\]
The choice for $(\alpha_n)_{n=0}^\infty$ is now made exactly as in \cite{CI}, 
finishing the proof. 
\end{proof}

Once again, in terms of Definition \ref{pQMetric} we have shown the following result:

\begin{corollary}
Let $p \in [1, \infty)$. Then  there exists a sequence of real numbers $\alpha=(\alpha_n)_{n=0}^\infty$ with $\alpha_n \to \infty$ and such that $(A(d, \rho), L^p(\mu_{\geq 0}), D_\alpha)$, the associated $L^p$-spectral triple from Theorem \ref{UHFpST},  is metric. 
\end{corollary}

\section{A unifying example}\label{Gp_UHF}

In this section we will present an example of a bounded-doubling length function on a group, whose $L^p$-group algebra is naturally a UHF $L^p$-operator algebra. 
This example combines and unifies instances of metric UHF spectral triples (in the sense of E.\ Christensen and C.\ Ivan \cite{CI}) with metric spectral triples associated to length functions with bounded doubling on groups (as studied by M. \ Christ and  M.\ A.\ Rieffel \cite{ChristRieffel17}). Further, we mention that this example is already of interest in the classical C*-case. 

We begin with the following, not immediately obvious 
strict inequality, that will be needed 
at some point in the proof of 
Proposition \ref{L_nBD}.
\begin{lemma}\label{SinLemma}
If $x \geq \frac{4}{3}$, then $2\sin(\frac{\pi}{x}) > \frac{1}{x}$. 
\end{lemma}
\begin{proof}
We will show instead that $2\sin(\pi y) > y$ 
for any $y \in (0, \frac{3}{4}]$. The desired result 
will then follow by taking $y = \frac{1}{x}$. 

If $y=0$, then $2\sin(\pi y) = y$, 
and if $y=\frac{3}{4}$, then 
$2\sin(\pi y) > y$. Now let 
$f \colon [0, \frac{3}{4}) \to \R$ be given by 
$f(y)\coloneqq 2\sin(\pi y) - y$. Since $f''(y)=-2\pi^2\sin(\pi y)$, 
it follows that $f''(y)<0$ for all $y \in (0, \frac{3}{4}]$
and therefore the graph of $f$ is concave downward. 
Thus, the inequality $2\sin(\pi y) > y$ is preserved
on all of $ (0, \frac{3}{4}]$, as we wanted to prove. 
\end{proof}

For a fixed $n \in \Z_{>1}$ we first consider the group $G_n\coloneqq\bigoplus_{j =1}^\infty \Z/n\Z$. 
Our first goal is to define a length function on $G_n$. 
To do so, let $\mathbb{L}_n \colon G_n \to \R_{\geq 0}$ be given by 
\begin{equation}\label{LonZ}
\mathbb{L}_n\left((m_j)_{j=1}^\infty\right) \coloneqq 
\sum_{j=1}^\infty \left| \exp\left(\frac{2\pi im_j}{n}\right)-1 \right|n^j.
\end{equation}
We remark that, by definition of direct sums of groups, the expression 
in the right hand side of (\ref{LonZ}) is always a finite sum. 
\begin{proposition}\label{L_nBD}
The function $\mathbb{L}_n \colon G_n \to \R_{\geq 0}$ given by (\ref{LonZ})
is a proper length function on $G_n=\bigoplus_{j =1}^\infty \Z/n\Z$ of bounded $n$-dilation
with $K_{\LL_n} = n^4$ (see Definition \ref{LengthF}).
\end{proposition}
\begin{proof}
First we show that $\mathbb{L}_n$ is indeed a length function on $G_n$. 
It's clear that $\mathbb{L}_n((0)_{j=1}^\infty)=0$. Now 
assume that $\mathbb{L}_n((m_j)_{j=1}^\infty) =0$, then 
$| \exp(\frac{2\pi im_j}{n})-1|n^j=0$ for all $j \in \Z_{\geq 1}$, 
and therefore $m_j \equiv n \pmod{n}$ for every $j  \in \Z_{\geq 1}$, which 
gives that $(m_j)_{j=1}^\infty$ is the identity of $G_n$. Next, for 
any  $(m_j)_{j=1}^\infty \in G_n$, 
\begin{align*}
\mathbb{L}_n\left((-m_j)_{j=1}^\infty\right) & = 
\sum_{j=1}^\infty \left| \exp\left(\frac{-2\pi im_j}{n}\right)-1 \right|n^j \\
&= \sum_{j=1}^\infty \left| \exp\left(\frac{2\pi im_j}{n}\right)-1 \right|n^j \\
& = \mathbb{L}_n\left((m_j)_{j=1}^\infty\right).
\end{align*}
Next, if $(m_j)_{j=1}^\infty,(m'_j)_{j=1}^\infty  \in G_n$, observe that for each $j \in \Z_{\geq 1}$,
\[
\exp\left(\frac{2\pi i(m_j+m'_j)}{n}\right)-1 = \exp\left(\frac{2\pi i m_j}{n}\right)\left( \exp\left(\frac{2\pi i m'_j}{n}\right) - \exp\left(\frac{-2\pi i m_j}{n}\right)\right),
\]
whence 
\begin{align*}
\left| \exp\left(\frac{2\pi i(m_j+m'_j)}{n}\right)-1 \right| & = \left| \exp\left(\frac{2\pi i m'_j}{n}\right) - \exp\left(\frac{-2\pi i m_j}{n}\right)\right| \\ 
& = \left| \exp\left(\frac{2\pi i m'_j}{n}\right) -1 - \exp\left(\frac{-2\pi i m_j}{n}\right)+1\right|\\
& \leq  \left| \exp\left(\frac{2\pi im_j}{n}\right)-1 \right| + \left| \exp\left(\frac{2\pi im'_j}{n}\right)-1 \right|.
\end{align*}
This gives at once that 
\[
\mathbb{L}_n\left((m_j+m'_j)_{j=1}^\infty\right) \leq \mathbb{L}_n\left((m_j)_{j=1}^\infty\right)  + \mathbb{L}_n\left((m'_j)_{j=1}^\infty\right),
\]
implying that $\mathbb{L}_n$ is indeed a length function on $G$. 

In order to show that $\mathbb{L}_n$ is proper, it suffices to show that 
for each $k \in \Z_{\geq 1}$,
\begin{equation}\label{B(n^k)}
\left\{(m_j)_{j=1}^\infty \in G_n \colon \mathbb{L}_n\left((m_j)_{j=1}^\infty\right)  \leq n^k\right\} \subseteq \bigoplus_{j=1}^{k} \Z/n\Z.
\end{equation}
The right hand side in Equation \eqref{B(n^k)} is understood as the subgroup of $G_n$ consisting of elements 
$(m_j)_{j=1}^\infty \in G_n$ with $m_j=0$ for all $j > k$. 
To prove the inclusion in Equation \eqref{B(n^k)},  take any $(m_j)_{j=1}^\infty \in G_n \setminus \bigoplus_{j=1}^{k-1} \Z/n\Z$, which means precisely that there is $l > k$ such that $m_l \neq 0$. Now using that each 
side in the regular $n^\op{th}$-gon inscribed in the unit circle (that is the one whose vertices are given 
by the $n$-th roots of unity) has length $2\sin(\frac{\pi}{n})$ and that $2\sin(\frac{\pi}{n})>\frac{1}{n}$ (see Lemma \ref{SinLemma}), we find 
\begin{align*}
\mathbb{L}_n\left((m_j)_{j=1}^\infty\right)   & \geq \left| \exp\left(\frac{2\pi im_l}{n}\right)-1 \right|n^l\\
& \geq \left| \exp\left(\frac{2\pi}{n}\right)-1 \right|n^{k+1} \\
& = 2\sin\left(\frac{\pi}{n}\right)n^{k+1} \\
& > \frac{1}{n}n^{k+1}\\
& =n^{k},
\end{align*}
so the inclusion in Equation \eqref{B(n^k)} holds and therefore $\op{card}(B_{\LL_n}(n^k)) \leq n^{k}$, 
which implies that $\LL_n$ is indeed proper. 

Finally, we claim that for any $k \in \Z_{\geq 1}$ and any $l \in \Z_{\geq 1}$
\begin{equation}\label{n-dil}
\op{card}(B_{\LL_n}(n^{k+l})) \leq n^{2+l}\op{card}(B_{\LL_n}(n^k)).
\end{equation}
When $k \in \{1,2\}$, it is clear that $\op{card}(B_{\LL_n}(n^k)) \geq 1$ and therefore 
\[
\frac{\op{card}(B_{\LL_n}(n^{k+l}))}{\op{card}(B_{\LL_n}(n))} \leq n^{k+l} \leq n^{2+l},
\]
so \eqref{n-dil} holds for $k \in \{1,2\}$. Now for any $k \in \Z_{\geq 3}$, 
take any $(m_j)_{j=1}^{\infty} \in \bigoplus_{j=1}^{k-2} \Z/n\Z \subset G$ and notice that 
\begin{align*}
\mathbb{L}_n\left((m_j)_{j=1}^\infty\right) & = 
\sum_{j=1}^{k-2} \left| \exp\left(\frac{2\pi im_j}{n}\right)-1 \right|n^j \\
& \leq 
2\sum_{j=1}^{k-2} n^j \\
&= 2 \cdot \left( \frac{n^{k-1}-n}{n-1}\right) \leq n\cdot n^{k-1}\\
&=n^k.
\end{align*}
That is, $\bigoplus_{j=1}^{k-2} \Z/n\Z \subseteq B_{\LL_n}(n^k)$ and therefore $n^{k-2} \leq \op{card}(B_{\LL_n}(n^k))$
for any $k \in \Z_{\geq 3}$. Thus, for any $l \in \Z_{\geq 1}$ and any $k \in \Z_{\geq 3}$ we have 
\[
\frac{\op{card}(B_{\LL_n}(n^{k+l}))}{\op{card}(B_{\LL_n}(n^k))} \leq \frac{n^{k+l}}{n^{k-2}} = n^{2+l},
\] 
proving that \eqref{n-dil} also holds for $k \in \Z_{\geq 3}$, so our claim is proved. 
We will use this claim to check that $\LL_n$ is of bounded $n$-dilation with $K_{\LL_n}=n^4$. To do so, take any any $R \in \R_{\geq 1}$. If $R=1$, it's clear that 
\[
\op{card}(B_{\LL_n}(n)) \leq n\op{card}(B_{\LL_n}(1)) < n^4\op{card}(B_{\LL_n}(1)).
\]
Otherwise, find $k \in \Z_{\geq 1}$ such that $n^k \leq R < n^{k+1}$. Then, $B_{\LL_n}(n^k) \subseteq B_{\LL_n}(R)$ and $B_{\LL_n}(nR) \subseteq B_{\LL_n}(n^{k+2})$, whence \eqref{n-dil} gives 
\[
\frac{\op{card}(B_{\LL_n}(nR))}{\op{card}(B_{\LL_n}(R))} \leq \frac{\op{card}(B_{\LL_n}(n^{k+2}))}{\op{card}(B_{\LL_n}(n^k))} \leq n^{2+2}=n^4.
\]
This gives at once that $\LL_n$ is indeed of bounded $n$-dilation with $K_{\LL_n}=n^4$ and finishes the proof.
\end{proof}

\begin{corollary}\label{corLn}
The function $\mathbb{L}_n \colon G_n \to \R_{\geq 0}$ given by (\ref{LonZ})
is of bounded doubling with $C_{\LL_n} = n^{4(1+\log_n(2))}$ (see Definition \ref{LengthF}).
\end{corollary}
\begin{proof}
Thar $\LL_n$ is of bounded doubling follows from the fact that $\mathbb{L}_n$ has 
bounded $n$-dilation, shown in Proposition \ref{L_nBD},
and the discussion after Definition 5.1 in \cite{LoWu21}. 
That $C_{\LL_n} = n^{4(1+\log_n(2))}$ follows from  \cite[Equation (5.2)]{LoWu21}. 
\end{proof}

Finally, we point out that the group $G_n \times G_n$ where as before $G_n=\bigoplus_{j=1}^\infty \Z/n\Z$ also has a bounded doubling length function on it,  providing another instance of unifying UHF/Group examples of metric spectral triples. 

Indeed, we know from Corollary \ref{corLn} that $\LL_n$ is a proper length function on $G_n$ of bounded doubling. Therefore, Proposition 3.7 in \cite{FaLaLaPa}, gives that $\LL_n^\Sigma \colon G_n \times G_n 
\to \R_{\geq 0}$, defined by 
\[
\LL_n^\Sigma\big( (x,y) \big)= \LL_n(x)+\LL_n(y),
\]
is a proper length function on $G_n \times G_n$ of bounded doubling with $C_{\LL_n^\Sigma}=n^{16(1+\log_n(2))}$. 

\bibliographystyle{plain}
\bibliography{LpST.bib} 

\begin{thebibliography}{10}

\bibitem{AguLat}
K.~Aguilar and F.~Latr\'emoli\`ere.
\newblock Quantum ultrametrics on {AF} algebras and the {G}romov-{H}ausdorff
  propinquity.
\newblock {\em Studia Math.}, 231:149--193, 2015.

\bibitem{Arhancet24}
C.~Arhancet.
\newblock Classical harmonic analysis viewed through the prism of
  noncommutative geometry.
\newblock preprint (2024), available at \texttt{( arXiv:2409.07750v2
  [math.FA])}.

\bibitem{Arha24}
C.~Arhancet.
\newblock Sobolev algebras on {L}ie groups and noncommutative geometry.
\newblock {\em J. Noncommut. Geom.}, 18(2):451--500, 2024.

\bibitem{ArhaSob24}
C.~Arhancet.
\newblock Sobolev algebras on {L}ie groups and noncommutative geometry.
\newblock {\em J. Noncommut. Geom.}, 18(2):451--500, 2024.

\bibitem{Arha24CV}
C.~Arhancet.
\newblock Spectral triples, {C}oulhon-{V}aropoulos dimension and heat kernel
  estimates.
\newblock {\em Adv. Math.}, 451:Paper No. 109794, 58, 2024.

\bibitem{Arhancet-Kriegler-22}
C.~Arhancet and C.~Kriegler.
\newblock {\em Riesz transforms, {H}odge-{D}irac operators and functional
  calculus for multipliers}, volume 2304 of {\em Lecture Notes in Mathematics}.
\newblock Springer, Cham, [2022] \copyright 2022.

\bibitem{Beta19}
H.~Ariza, L.~Cabezas, J.~Canto, G.~Labora-Cao, M.~L. Godoy, M.~M. Uterra, and
  J.~Betancor.
\newblock Holomorphic functional caculus for sectorial operators.
\newblock IX Escuela-Taller de Análisis Funcional (2019).

\bibitem{AusKK25}
A.~Austad, J.~Kaad, and D.~Kyed.
\newblock Quantum metrics on crossed products with groups of polynomial growth.
\newblock {\em Trans. Amer. Math. Soc.}, 378(3):1939--1973, 2025.

\bibitem{Austad25}
A.~Austad and D.~Kyed.
\newblock Quantum metrics from length functions on quantum groups.
\newblock {\em J. Funct. Anal.}, 290(4):Paper No. 111256, 2026.

\bibitem{Babenko48}
K.~I. Babenko.
\newblock On conjugate functions.
\newblock {\em Doklady Akad. Nauk SSSR (N.S.)}, 62:157--160, 1948.

\bibitem{blph2020}
D.~P. Blecher and N.~C. Phillips.
\newblock {$L^p$}-operator algebras with approximate identities, {I}.
\newblock {\em Pacific J. Math.}, 303(2):401--457, 2019.

\bibitem{bonsall1970numerical}
F.~F. Bonsall and J.~Duncan.
\newblock Numerical range.
\newblock {\em Bulletin of the London Mathematical Society}, 2:178--182, 1970.

\bibitem{lopes25}
W.~Braucks and A.~O. Lopes.
\newblock The {D}irac operator for the {R}uelle-{K}oopman pair on
  {$L^p$}-spaces: an interplay between connes distance and symbolic dynamics.
\newblock preprint (2025), available at \texttt{( arXiv:2504.15451 math-ph)},
  2025.

\bibitem{Choi15}
Y.~Choi.
\newblock Directly finite algebras of pseudofunctions on locally compact
  groups.
\newblock {\em Glasg. Math. J.}, 57(3):693--707, 2015.

\bibitem{Choi24}
Y.~Choi, E.~Gardella, and H.~Thiel.
\newblock Rigidity results for {$L^p$}-operator algebras and applications.
\newblock {\em Adv. Math.}, 452:Paper No. 109747, 47, 2024.

\bibitem{ChristRieffel17}
M.~Christ and M.~A. Rieffel.
\newblock Nilpotent group {${\rm C}^*$}-algebras as compact quantum metric
  spaces.
\newblock {\em Canad. Math. Bull.}, 60(1):77--94, 2017.

\bibitem{CI}
E.~Christensen and C.~Ivan.
\newblock Spectral triples for {AF} {$C^*$}-algebras and metrics on the
  {C}antor set.
\newblock {\em J. Operator Theory}, 56(1):17--46, 2006.

\bibitem{Chung18}
Y.~C. Chung and K.~Li.
\newblock Rigidity of {$\ell^p$} {R}oe-type algebras.
\newblock {\em Bull. Lond. Math. Soc.}, 50(6):1056--1070, 2018.

\bibitem{Connes89}
A.~Connes.
\newblock Compact metric spaces, {F}redholm modules and hyperfiniteness.
\newblock {\em Ergodic Theory Dynam Systems}, 9:207--220, (1989).

\bibitem{Cortinas19}
G.~Corti\~nas and M.~E. Rodr\'iguez.
\newblock {$L^p$}-operator algebras associated with oriented graphs.
\newblock {\em J. Operator Theory}, 81(1):225--254, 2019.

\bibitem{defflor1993}
A.~Defant and K.~Floret.
\newblock {\em Tensor norms and operator ideals}, volume \textbf{173} of {\em
  North-Holland mathematics studies}.
\newblock North-Holland, 1 edition, (1993).

\bibitem{FaLaLaPa}
C.~Farsi, T.~Landry, N.~S. Larsen, and J.~Packer.
\newblock Spectral triples for noncommutative solenoids and a {W}iener's lemma.
\newblock {\em J. Noncommut. Geom.}, 18(4):1415--1452, 2024.

\bibitem{FLP24}
C.~Farsi, F.~Latrémolière, and J.~Packer.
\newblock Spectral {T}riples on noncommutative solenoids from the standard
  spectral triples on quantum tori.
\newblock {\em \textit{To appear in} Proc. Amer. Math. Soc.}, (2024) preprint
  available at \texttt{arXiv:2403.16323 [math.OA]}.

\bibitem{figiel1984}
T.~Figiel, T.~Iwaniec, and A.~Pe\l{}czy\'{n}ki.
\newblock Computing norms and critical exponents of some operators in
  ${L}^p$-spaces.
\newblock {\em Studia Math.}, \textbf{79}:227--274, (1984).

\bibitem{Gardella21}
E.~Gardella.
\newblock A modern look at algebras of operators on {$L^p$}-spaces.
\newblock {\em Expo. Math.}, 39(3):420--453, 2021.

\bibitem{GarLup16}
E.~Gardella and M.~Lupini.
\newblock Nonclassifiability of {UHF} {$L^p$}-operator algebras.
\newblock {\em Proc. Amer. Math. Soc.}, 144(5):2081--2091, 2016.

\bibitem{GL17}
E.~Gardella and M.~Lupini.
\newblock Representations of étale groupoids on ${L}^p$-spaces.
\newblock {\em Adv. in {M}ath}, \textbf{318}:233--278, (2017).

\bibitem{Gardella15}
E.~Gardella and H.~Thiel.
\newblock Banach algebras generated by an invertible isometry of an
  {$L^p$}-space.
\newblock {\em J. Funct. Anal.}, 269(6):1796--1839, 2015.

\bibitem{GarThi15}
E.~Gardella and H.~Thiel.
\newblock Group algebras acting on ${L^p}$-spaces.
\newblock {\em J. Fourier Anal. Appl.}, 21:1310--1343, (2015).

\bibitem{GerMes25}
D.~M. Gerontogiannis and B.~Mesland.
\newblock Ideal quantum metrics from fractional {L}aplacians.
\newblock preprint (2025), available at
  \texttt{https://arxiv.org/abs/2502.04187}.

\bibitem{giles1974numerical}
J.~R. Giles and G.~Joseph.
\newblock The numerical ranges of unbounded operators.
\newblock {\em Bulletin of the Australian Mathematical Society}, 11:31--36,
  1974.

\bibitem{Haase03}
M.~Haase.
\newblock The functional calculus for sectorial operators and similarity
  methods.
\newblock PhD thesis. Ulm University, 2003.

\bibitem{CSH71}
C.~S. Herz.
\newblock The theory of {$p$}-spaces with an application to convolution
  operators.
\newblock {\em Trans. Amer. Math. Soc.}, 154:69--82, 1971.

\bibitem{CSH73}
C.~S. Herz.
\newblock Harmonic synthesis for subgroups.
\newblock {\em Ann. Inst. Fourier}, 23:91--123, (1973).

\bibitem{Hyt17}
T.~Hyt\"onen, J.~van Neerven, M.~Veraar, and L.~Weis.
\newblock {\em Analysis in {B}anach spaces. {V}ol. {II}. {P}robabilistic
  methods and operator theory}, volume~67 of {\em Ergebnisse der Mathematik und
  ihrer Grenzgebiete. 3. Folge. A Series of Modern Surveys in Mathematics
  [Results in Mathematics and Related Areas. 3rd Series. A Series of Modern
  Surveys in Mathematics]}.
\newblock Springer, Cham, 2017.

\bibitem{Kerr03}
D.~Kerr.
\newblock Dimension and dynamical entropy for metrized {$C^*$}-algebras.
\newblock {\em Comm. Math. Phys.}, 232(3):501--534, 2003.

\bibitem{Latremoliere16b}
F~Latr\'emoli\`ere.
\newblock Equivalence of quantum metrics with a common domain.
\newblock {\em J. Math. Anal. Appl.}, 443:1179--1195, 2016.

\bibitem{Li06}
H.~Li.
\newblock Order-unit quantum {G}romov-{H}ausdorff distance.
\newblock {\em J. Funct. Anal.}, 231(2):312--360, 2006.

\bibitem{LinTza77}
J.~Lindenstrauss and L.~Tzafriri.
\newblock {\em Classical {B}anach spaces. {I}}, volume Band 92 of {\em
  Ergebnisse der Mathematik und ihrer Grenzgebiete [Results in Mathematics and
  Related Areas]}.
\newblock Springer-Verlag, Berlin-New York, 1977.
\newblock Sequence spaces.

\bibitem{LongWuRes17}
B.~Long and W.~Wu.
\newblock Twisted group {$C^*$}-algebras as compact quantum metric spaces.
\newblock {\em Results Math.}, 71(3-4):911--931, 2017.

\bibitem{LoWu21}
B.~Long and W.~Wu.
\newblock Twisted bounded-dilation group {C*}-algebras as {C*}-metric algebras.
\newblock {\em Science China Mathematics}, 64:547--572, (2021).

\bibitem{ncp2012AC}
N.~C. Phillips.
\newblock Analogs of {C}untz algebras on ${L^p}$ spaces.
\newblock preprint (2012), available at \texttt{(arXiv:1201.4196 [math.FA])}.

\bibitem{NCP13}
N.~C. Phillips.
\newblock Crossed products of ${L^p}$ operator algebras and the ${K}$-theory of
  {C}untz algebras on ${L^p}$ spaces.
\newblock preprint (2013), available at \texttt{(arXiv:1309.6406 [math.FA])}.

\bibitem{NCP13_2}
N.~C. Phillips.
\newblock Simplicity of {UHF} and {C}untz algebras on ${L^p}$ spaces.
\newblock preprint (2013), available at \texttt{(arXiv:1309.0115 [math.FA])}.

\bibitem{ncpmgv2017AF}
N.~C. Phillips and M.~G. Viola.
\newblock Classification of spatial ${L}^p$ {AF} algebras.
\newblock {\em International Journal of Mathematics}, \textbf{31} (13), (2020).

\bibitem{R98}
M.~A. Rieffel.
\newblock Metrics on states from actions of compact groups.
\newblock {\em Doc. Math.}, 3:215--229, 1998.

\bibitem{R99}
M.~A. Rieffel.
\newblock Metrics on state spaces.
\newblock {\em Doc. {M}ath.}, \textbf{4}:559--600, (1999).

\bibitem{Rieffel00}
M.~A. Rieffel.
\newblock Gromov-{H}ausdorff distance for quantum metric spaces.
\newblock {\em Mem. Amer. Math. Soc.}, 168:1--65, 2004.
\newblock math.OA/0011063.

\bibitem{sinclair1971norm}
A.~M. Sinclair.
\newblock The norm of a {H}ermitian element in a {B}anach algebra.
\newblock {\em Proceedings of the {A}merican {M}athematical {S}ociety},
  28(2):446--450, 1971.

\bibitem{Venni93}
A.~Venni.
\newblock A counterexample concerning imaginary powers of linear operators.
\newblock In {\em Functional analysis and related topics, 1991 ({K}yoto)},
  volume 1540 of {\em Lecture Notes in Math.}, pages 381--387. Springer,
  Berlin, 1993.

\end{thebibliography}

\vfill

\end{document}